\newtheorem{lemma}{Lemma}
\newtheorem{proposition}{Proposition}
\newcounter{Cpara}
\newcommand{\Cpara}[1]{%
  \refstepcounter{Cpara}%
  \par\medskip
  \noindent\textbf{C\theCpara.\ #1}\par\nopagebreak\smallskip
}
\title{Safe and Operationally Efficient Longitudinal Control of Autonomous Truck Platoons }
\author{Alexander Hammerl, Ravi Seshadri, Thomas Kjær Rasmussen, Otto Anker Nielsen}
\begin{document}
\maketitle
\begin{abstract}
This paper presents a hierarchical longitudinal control architecture for autonomous truck platoons that jointly addresses safety, string stability, and economic efficiency.
The framework integrates a high-rate safety projection filter, a spacing-regulation layer based on a lag-aware proportional–integral–derivative (PID) controller, and a slow-timescale economic optimizer balancing fuel consumption and travel time. 
The safety layer guarantees collision avoidance under bounded actuation delays by enforcing forward invariance of a velocity-aware headway constraint through a high-order control barrier function. 
The regulation layer shapes the spacing-error dynamics into a second-order form with interpretable parameters for damping and natural frequency while explicitly accounting for actuator lag. 
At the macroscopic level, fuel use is modeled by a tractive-power relation that captures aerodynamic benefits of close spacing, enabling a long-term optimization of speed trajectories subject to comfort and energy trade-offs. 
We show that the closed-loop dynamics converge to the Optimal Velocity Model with Relative Velocity (OVRV) under undisturbed conditions and derive worst-case upper bounds for platoon stabilization time. 
Numerical case studies demonstrate the superiority of the proposed design over an canonical baseline controllers in both transient behavior and long-term energy efficiency.
\end{abstract}

\section{Introduction}
Heavy-duty road freight remains indispensable to modern supply chains but generates substantial safety and environmental impacts worldwide.  Operators must therefore adopt strategies that reconcile economic efficiency with environmental compatibility and safety requirements. Truck platooning - where vehicles travel cooperatively at close inter-vehicle spacing - has emerged as a promising response to these challenges. By enabling drafting effects, platooning reduces aerodynamic drag and fuel consumption.Coordinated motion can smooth traffic flow and increase effective road capacity, while automation alleviates driver workload during highway cruising. These potential benefits position truck platooning at a complex intersection of technical, economic, regulatory, and human factors that must be addressed simultaneously.
Modern platooning architectures rely on vehicle-to-vehicle (V2V) communication to coordinate cooperative adaptive cruise control (CACC), which automates longitudinal motion control across the formation to maintain cohesion. This connectivity enables follower vehicles to react not only to their immediate predecessor but also to broadcast information from the platoon leader or other members. While such communication can enhance coordination, it introduces dependencies on reliable data exchange and raises questions about how control authority should be distributed across the network. The resulting control problem must reconcile fundamentally conflicting objectives. Safety constraints demand guaranteed collision avoidance despite sensor noise, communication interruptions, heterogeneous payloads, and the substantial actuator dynamics inherent to heavy vehicles. Performance requirements call for precise spacing regulation and string stability to prevent disturbance amplification along the platoon. Economic considerations favor tighter gaps and higher cruise speeds to maximize fuel savings and schedule adherence. This is precisely the operating regimes where stability becomes most fragile. This multi-objective landscape motivates the development of hierarchical longitudinal control architectures that can manage these competing demands in a structured, transparent manner. The present work addresses this need by proposing a layered framework that separates safety guarantees, formation control, and economic optimization into distinct but coordinated control layers.

These anticipated benefits have motivated research efforts spanning several decades, beginning with foundational demonstrations in the PATH program \cite{Shladover1991}. System-level control studies demonstrate that even moderate CACC penetration, particularly autonomous truck platoons, yields substantial operational and environmental benefits by stabilizing traffic flow and reducing fuel consumption \cite{MartinezDiaz2024, Fu2023,Wang2024,Shang2024}.At the vehicle-technology level, a recent systematic review by \cite{Botelho2025} emphasizes that reliable V2V communication and robust longitudinal control algorithms are fundamental to the technical feasibility and safe operation of truck platooning systems. Research has explored diverse perception, actuation, and control paradigms to address these technical requirements. 

\cite{Franke1995} pioneered the concept through field tests comparing scenarios with and without V2V communication at different gap distances, demonstrating that technical feasibility and driving precision improve with V2V. \cite{Neto2024} further demonstrate 
V2V communication enables CACC systems, which reduce response delay relative to the preceding vehicle and allow stable coordination at higher speeds \cite{Neto2024}.  \cite{Mikami2019} evaluated 5G capabilities for low-latency V2V and V2N communications, testing platooning speeds from 10 to 90 km/h with a 10 m gap.
Addressing safety concerns, \cite{Aki2014} developed a redundant braking system for simultaneous V2V and brake system failures, validated through mixed on-road and simulated tests at 80 km/h with varying gaps (10-20 m).
For trajectory control, \cite{Kaneko2014} develop a real-time trajectory generation algorithm incorporating risk potential and vehicle dynamics models, validated in field tests with obstacle avoidance and platoon merging at entry speeds of 60-80 km/h. \cite{Lee2020} propose an algorithm enabling following trucks to compute the leading vehicle's front trajectory via V2V communication, tested at 80 km/h with 0.7 s gaps under various maneuvers including unintended steering and lane changes. \cite{Kunze2009} develop a Driver Information System (DIS) architecture to optimize truck platoon operations, route planning and truck selection. Distributed control strategies including consensus-based approaches \cite{Li2019nonlinear,Li2020platoonconsensus}, acceleration-feedback extensions \cite{GeOrosz2014}, and sliding-mode methods \cite{Li2019ISM} establish conditions for internal and string stability under heterogeneous time delays. Information-flow topology influences on platoon robustness have been systematically investigated \cite{Zheng2014,Guo2018adaptive}, while high-fidelity simulation platforms enable realistic validation of distributed feedback–feedforward controllers under sensing and actuation constraints \cite{Fan2021PreScan,Deng2016HDVframework,Li2017TransModeler}.
Additionally, some efforts address mixed-traffic environments where autonomous trucks interact with human-driven leaders \cite{Ozkan2022DSMPC} and \cite{Li2024STdi4DMPC} address trajectory control in mixed-traffic environments where autonomous trucks interact with human-driven leaders. String stability considerations in CACC design have been addressed through reinforcement learning with Markov Decision Processes \cite{Laumonier2006} and Padé approximation methods to model actuator delay effects on minimum inter-vehicle spacing \cite{Xing2016}. 

Building on these existing approaches, our work makes the following contributions to address the diverse requirements and objectives of truck platooning holistically. We propose a hierarchical control architecture for autonomous truck platoons that naturally balances safety requirements with mechanical and economic performance metrics through a transparent PID spacing controller and an actuator-lag aware gain tuning recipe. The architecture is designed to accommodate the soft prioritization between stability and economic considerations within its hierarchical structure. Additionally, it accounts for the practically relevant timescales required to achieve these objectives through appropriate adjustments. Compared to existing approaches, the required communication overhead is modest, and we provide comprehensive analytical robustness guarantees for arbitrary platoon sizes, demonstrating the controller's suitability for orchestrating motion control even in large-scale platoon configurations.

The remainder of this paper is organized as follows. Section \ref{sec:model} establishes the system model and control objectives. Section \ref{sec:control} presents the hierarchical control architecture, comprising a CBF safety filter, PID spacing regulation with lag compensation, and economic optimization. Section \ref{sec:analysis} provides analytical guarantees for closed-loop convergence and worst-case performance bounds. Section \ref{sec:case} validates the controller through numerical case studies comparing its performance against baseline designs. Section \ref{sec:conclusion} summarizes the contributions and discusses directions for future work.

\section{System Model and Control Objectives}
\label{sec:model}
We consider a single-lane platoon of autonomous heavy-duty trucks indexed by $i=1,\dots,N$, where $i=1$ denotes the lead vehicle. Each truck is modeled as a point mass with longitudinal kinematics
\begin{equation}
\dot p_i(t)=v_i(t),\qquad \dot v_i(t)=u_i(t),\quad u_i(t)\in[a_{\min},a_{\max}],
\label{eq:dynamics}
\end{equation}
where $p_i$ is position, $v_i$ is speed, and $u_i$ is the commanded longitudinal acceleration. Under full autonomy, the state evolution is governed solely by these kinematic laws and actuator constraints, and no behavioral car-following rules are imposed at the dynamics level. Instead, spacing policies will be generated by a control loop and then filtered through a safety barrier in Section \ref{sec:control}. The inter-vehicle spacing behind the predecessor $i-1$ is
\begin{equation}
s_i(t)=p_{i-1}(t)-p_i(t)-L,
\label{eq:spacingdef}
\end{equation}
with $L$ the effective vehicle length. The relative speed is given by \begin{equation}\label{eq:rel-speed}
\Delta v_i = v_{i-1} - v_i.
\end{equation}
We adopt a constant time-gap spacing policy as the nominal target,
\begin{equation}
s_i^\star(v_i)=s_0+\tau\,v_i,
\label{eq:ctg}
\end{equation}
where $s_0>0$ is a standstill offset and $\tau>0$ is the desired time gap. The regulation errors used throughout the paper are
\begin{equation}
e_{s,i}=s_i-s_i^\star,
\label{eq:spacing_error}
\end{equation}
\begin{equation}
e_{\Delta v,i}=\Delta v = v_{i-1}-v_i, \text{and}
\label{eq:relative_speed_error}
\end{equation}
\begin{equation}
\label{eq:eitg}
e_i := \big(x_{i-1}-x_i-\eta\big)-\tau\,v_i.
\end{equation}
Throughout, we assume each follower~$i$ measures $(s_i, \Delta v_i, v_i)$  from onboard radar/lidar and wheel odometry. In addition, the predecessor’s speed and commanded acceleration~$(v_{i-1}, u_{i-1})$ are received via vehicle-to-vehicle (V2V) communication and used in the control law. This represents a modest and technically well-established communication requirement: such data exchange is standard in commercial truck platooning systems, which typically broadcast basic motion states (position, speed, and acceleration) at $10\text{–}20\,\text{Hz}$ \cite{ETSI2019}. Consequently, the proposed control strategy can be implemented in a distributed, one-hop architecture, where each vehicle computes its control input independently using locally sensed states together with the predecessor’s broadcast speed and acceleration. Consistent with widely used car-following abstractions, the effective control acts at the acceleration level, which both matches implementation reality and simplifies stability and safety analysis around cruising equilibria. Heavy-truck actuators exhibit finite response times due to engine, retarder, and pneumatic brake dynamics. We model this via a first-order actuation lag between commanded acceleration $u_i$ and realized acceleration $a_i$:
\begin{equation}
\label{eq:actuation_lag}
\dot s_i = v_{i-1}-v_i,\qquad
\dot v_i = a_i,\qquad
\dot a_i = -\frac{1}{\tau_a}\,(a_i-u_i),
\end{equation}
where $\tau_a>0$ is the actuation time constant.

The control objectives are hierarchal. The top priority is hard collision safety: ear-end collisions must be excluded at all times, including under abrupt leader braking and bounded actuation delays. We encode forward invariance of a velocity-aware safe set
\[
\mathcal{C} = \{x : h_i(x) \ge 0 \ \forall i\},
\]
where $h_i(x)$ is a braking-reachability or time-to-collision style function that lower-bounds the gap by a function of follower speed and closing rate. 
The inequality $h_i(x) \ge 0$ defines the geometric boundary of the safe region, while its time derivative condition below enforces that trajectories starting within $\mathcal{C}$ remain inside it for all future times (forward invariance).
We define the velocity-aware headway barrier as
\begin{equation}
h_i(x)=s_i-s_0-\tau_{\min} v_i-\frac{\big(\max\{0,\,v_i-v_{i-1}\}\big)^2}{2\,b_{\max}},
\label{eq:barrier}
\end{equation}
with $\tau_{\min}\!\in\![0,\tau]$ and $b_{\max}>0$ the guaranteed braking bound. This barrier enforces a one-sided, speed-dependent headway that preserves stopping feasibility even when the follower is faster than its predecessor. 

To ensure forward invariance of $\mathcal{C}$, the controller enforces a control barrier function (CBF) constraint. Because $h_i$ has relative degree two with respect to the control input $u_i$ due to the barrier's dependence on position and velocity, a standard first-order CBF condition (e.g., $\dot{h}_i + \kappa \cdot h_i \ge 0$) would not directly constrain $u_i$. We therefore employ a high-order zeroing CBF constraint
\begin{equation}\label{eq:cbf}
\ddot{h}_i + k_1 \dot{h}_i + k_2 h_i \ge 0,
\qquad k_1,k_2>0,
\end{equation}
which ensures exponential recovery of $h_i$ toward the safe set. Condition~\eqref{eq:cbf} acts as a second-order damped oscillator: whenever the system approaches the safety boundary $h_i=0$, the constraint forces the control input to maintain positive acceleration of the barrier function, thereby preventing any future violation of the safety constraint. The particular structure of \eqref{eq:barrier} yields a constraint that is affine in $u_i$ for the dynamics \eqref{eq:dynamics}, enabling a simple safety filter in Section~\ref{sec:control}.

Given safety as a non-negotiable state constraint, the primary performance 
objective is spacing regulation with string stability. Within $\mathcal{U}_i^{\mathrm{s}}(x)$, primary performance is enforced through a low-order, easily implementable feedback that regulates spacing and attenuates velocity fluctuations using $(e_i,\,\Delta v_i)$ and, if available, $v_{i-1}$. At the microscopic 
level, we seek small $\lvert e_i \rvert$ and small relative speed 
$\lvert \Delta v_i \rvert$ during transients. At the macroscopic level, 
we want disturbances around an equilibrium $(\bar v,\bar s)$ satisfying \eqref{eq:ctg} to attenuate as they propagate upstream. We adopt the $L_\infty$ string-stability criterion
\begin{equation}
\sup_{t\ge 0}\,|e_{\Delta v,i}(t)|\ \le\ \sup_{t\ge 0}\,|e_{\Delta v,i-1}(t)|\qquad\text{for all }i\ge 2,
\label{eq:string}
\end{equation}
and we will verify \eqref{eq:string} empirically and via linearized frequency-response analyses of the closed loop in Section \ref{sec:analysis}. 

Finally, our third-tier objective is economic and operates at the slowest timescale: minimize fuel use subject to a value-of-time (VOT) speed preference, recognizing that platooning modifies aerodynamics and that trucks incur both traction and aerodynamic power. 
We represent the instantaneous fuel consumption rate $\dot m_f(t)$ by a tractive-power formulation that depends on the vehicle speed $v(t)$ and acceleration $\dot v(t)$,
\begin{equation}\label{eq:tractive-power}
\begin{aligned}
\dot m_f(t)
&=\frac{v(t)}{\eta_\text{eng}\,\mathrm{LHV}}
\Big[m\dot v(t) + mgC_r + \tfrac{1}{2}\rho\,C_dA_i\!\big(s(t)\big)v(t)^2\\[3pt]
&\qquad\qquad +\, mg\sin\theta(t)\Big]
+ \frac{P_\text{aux}}{\eta_\text{eng}\,\mathrm{LHV}}\,.
\end{aligned}
\end{equation}
where $m$ is the gross vehicle mass, $g=9.81\,\text{m}/\text{s}^2$ the gravitational acceleration, $C_r$ the rolling-resistance coefficient, $\rho=1.225\,\text{kg}/\text{m}^3$ the air density, $C_dA_i(s)$ the effective drag area of vehicle $i$, $\theta$ the road grade, $P_\text{aux}$ auxiliary power demand, $\eta_\text{eng}$ the engine efficiency, and $\mathrm{LHV}$ the fuel's lower heating value. This expression corresponds to the road-load balance used in regulatory heavy-duty vehicle energy models such as \textsc{Vecto}~\cite{VECTO2019} and \textsc{MOVES}~\cite{MOVES2020}, yet remains analytically smooth and computationally light for control design.  The cumulative fuel mass over a trajectory follows from $m_f(T)=\int_0^T\dot m_f(t)\,dt$.  

To capture the aerodynamic benefits of close spacing in platoons, we express the drag area as a function of the inter-vehicle gap $s(t)$ by an exponential saturation law
\begin{equation}\label{eq:saturation}
C_dA_i(s)=C_{d0}A\bigl[1-\alpha_i\,\mathrm{e}^{-s/s_{0,i}}\bigr],
\end{equation}
where $C_{d0}A$ is the free-flow drag area, $\alpha_i\in(0,1)$ quantifies the asymptotic drag-reduction potential, and $s_{0,i}$ a characteristic decay length beyond which the aerodynamic interference vanishes.  Experimental and computational studies for heavy-duty trucks suggest $\alpha_\text{lead}\approx0.1$-$0.15$ and $\alpha_\text{foll}\approx0.25$-$0.35$ with $s_{0,i}\approx8$--$15$\,m~\cite{NRELPlatoon2021,SAEJ3016,Bonnet2000}. We assume that the lead vehicle does not benefit from aerodynamic drag reduction, such that $C_dA_0 = C_{d0}A$. Combined with the tractive-power model, this relation yields a physically interpretable and differentiable mapping from the platoon’s space–time trajectory $(v(t),\dot v(t),s(t))$ to instantaneous fuel use. 

The decomposition of control objectives reflects both practical realities and exploitable structure. Safety must be guaranteed at the fastest (sub-second) timescale through hard constraints, regardless of higher-level commands. Spacing regulation and string-stability operate at intermediate timescales during transients and maneuvers. Economic optimization evolves most slowly, adapting to gradual changes in grade, wind, traffic conditions, or schedule. This timescale separation motivates the control architecture developed in Section \ref{sec:control}, wherein a high-rate safety filter ensures hard constraint satisfaction while permitting aggressive performance at slower scales.

\section{Hierarchical Longitudinal Control Design}
\label{sec:control}
In this section, we design a three-layer longitudinal controller for the cooperative truck platoon that enforces a lexicographic priority among the control objectives. The control input for follower $i$ is its commanded longitudinal acceleration $u_i(t)$ acting on the kinematics \eqref{eq:ctg}. 

\subsection{Safety Projection Filter}
We begin by deriving an explicit scalar constraint on $u_i$ that ensures forward invariance of the safe set under the CBF constraint \ref{eq:cbf}. 

Differentiating $h_i$ twice along the dynamics, substituting into \eqref{eq:cbf}, and applying the actuator dynamics
$\dot a_i=-(a_i-u_i)/\tau_a$ and
$\dot a_{i-1}=-(a_{i-1}-u_{i-1})/\tau_a$
yields an inequality affine in the commanded acceleration:
\begin{equation}\label{eq:cbf-lag}
A_i\,u_i \ge b_i,
\end{equation}
where
\begin{align}
A_i &= \frac{\tau_{\min}}{\tau_a}
      + \chi_i\,\frac{v_i}{b_{\max}\tau_a},\\[2pt]
b_i &= \Delta a_i - k_1\dot h_i - k_2 h_i
      + \frac{\tau_{\min} a_i}{\tau_a}\nonumber\\
    &\quad
      + \frac{\chi_i}{\tau_a b_{\max}}\!\left(v_i a_i - v_{i-1} a_{i-1}\right)
      + \chi_i\,\frac{v_{i-1}}{b_{\max}\tau_a}\,u_{i-1}.
\end{align}

with $\Delta a_i = a_{i-1}-a_i$ and
$\chi_i=\mathbf 1_{\{v_i>v_{i-1}\}}$.

In discrete time with sampling period $\Delta t$, a practical one-step equivalent of
$\dot h_i+\kappa h_i\ge 0$ is
\begin{equation}\label{eq:one-step}
h_i(t_{k+1}) \;\ge\; (1-\kappa \Delta t)\,h_i(t_k).
\end{equation}
Substituting these into \eqref{eq:one-step} yields, up to $O(\Delta t^2)$ terms, the same affine form
\eqref{eq:cbf-lag} with time-indexed quantities. Intersecting \eqref{eq:cbf-lag} with the mechanical actuator bounds $u_{\min}\le u_i\le u_{\max}$ defines a scalar interval of feasible accelerations.
At each control cycle, we enforce safety by projecting the nominal command $\bar u_i$ from the performance layer onto the CBF-feasible interval. We first determine the CBF bounds from the half-space \eqref{eq:cbf-lag}:
\begin{equation}
(u_i^{\mathrm{CBF,L}}, u_i^{\mathrm{CBF,U}}) = 
\begin{cases}
(b_i/A_i, +\infty) & \text{if } A_i > 0, \\
(-\infty, b_i/A_i) & \text{if } A_i < 0.
\end{cases}
\end{equation}
We then intersect these with the actuator limits $[u_{\min}, u_{\max}]$ to obtain the feasible interval:
\begin{equation}
u_i^{\mathrm{L}} = \max\{u_{\min}, u_i^{\mathrm{CBF,L}}\}, 
\qquad
u_i^{\mathrm{U}} = \min\{u_{\max}, u_i^{\mathrm{CBF,U}}\}.
\end{equation}
The applied input is then the clipped value
\begin{equation}
\label{eq:clip}
u_i^\star=\mathrm{clip}\!\left(\bar u_i;\,u_i^{\mathrm{L}},\,u_i^{\mathrm{U}}\right).
\end{equation}

\subsection{Spacing Regulation via PID Control}

At the intermediate layer, spacing and velocity regulation are achieved by a proportional–integral–derivative (PID) controller. The PID structure represents a canonical approach to feedback regulation that achieves effective performance across a wide range of applications without requiring a detailed process model or full state observation (c.f. \cite{Astrom1995}). 
A PID controller computes its control command $u_i^{\mathrm{PID}}(t)$ from a process variable $y(t)$—the measured system output—and a reference signal $y^\star(t)$ as
\begin{equation}\label{eq:pid-general}
u_i^{\mathrm{PID}}(t) 
= K_P\,e_i(t)
+ K_I \!\!\int_0^t \! e_i(\xi)\,d\xi
+ K_D\,\dot e_i(t),
\end{equation}
where $e_i(t) = y^\star(t) - y(t)$ is the control error, and $(K_P,K_I,K_D)$ are the proportional, integral, and derivative gains, respectively. 
The proportional term generates an instantaneous response to current deviations, the integral term compensates for accumulated bias, and the derivative term anticipates future trends by reacting to the rate of change of the error. 
Together, these actions yield a well-damped, low-overshoot convergence to the target.

In the present setting, the process variable is the inter-vehicle spacing $s_i(t)$. Each follower seeks to maintain the speed-dependent target spacing 
\eqref{eq:ctg}
where $s_0$ is the standstill gap and $\tau$ the desired time headway. The corresponding control-relevant spacing error is \eqref{eq:spacing_error}. 

The temporal segregation of PID control action into present, past, and future signals makes it particularly effective for regulating these variables in systems with moderate actuation lag. The derivative term provides lead compensation that counteracts the phase delay introduced by the first-order actuator dynamics \eqref{eq:actuation_lag}, while the proportional and integral terms together ensure accurate tracking despite persistent disturbances. The controller effectively damps speed oscillations while driving the spacing error $e_{s,i}$ to zero, thereby stabilizing the platoon around the nominal time-gap policy~\eqref{eq:ctg}. At steady state, $e_i(t) \to 0$ implies $s_i \to s_i^\star(v_i)$ and $v_i \to v_{i-1}$, ensuring both local gap regulation and string-stability, yielding robust regulation with transparent, easily tuned parameters.

The leading vehicle has no predecessor to measure spacing against, so its commanded acceleration must be generated directly from the desired platoon cruise speed $v^\star$, subject to smoothness and actuation limits. To avoid propagating sharp transients into the platoon, the leader's commanded speed is not changed instantaneously to the target $v^\star$, but instead follows a first-order low-pass filter
\begin{equation}\label{eq:lead-filter}
\dot v_0^{\mathrm{cmd}}
= \frac{1}{T_\ell}\!\left(v^\star - v_0^{\mathrm{cmd}}\right),
\end{equation}
where $T_\ell>0$ is the leader's speed-servo time constant. The commanded acceleration $u_0$ obtained from \eqref{eq:lead-filter} is then constrained to the interval $[u_{\min}, u_{\max}]$ to satisfy actuator limits.
Since the filter \eqref{eq:lead-filter} produces an infinitely differentiable output, it generates a smooth and physically realistic transition of the reference velocity $v_0^{\mathrm{cmd}}(t)$ that can be tracked by the leader's actuator without exciting oscillations in the following vehicles. In particular, differentiating \eqref{eq:lead-filter} with respect to time yields the jerk bound
\begin{equation}\label{eq:max_jerk}
\max_{t \ge t_0} \big| \ddot{v}_0^{\mathrm{cmd}}(t) \big|
\;\le\;
\frac{\Delta v^{\mathrm{cmd}}}{T_\ell^{2}},
\end{equation}
where $\Delta v^{\mathrm{cmd}} := |v^* - v_0^{\mathrm{cmd}}(t_0)|$ is the magnitude of the commanded speed change. This bound will be useful in constructing upper bounds on the spacing error in Section~\ref{sec:analysis}. The time constant $T_\ell$ in~\eqref{eq:lead-filter} should be slightly slower than the followers' closed-loop bandwidth to avoid exciting their control dynamics. 

Combining \eqref{eq:lead-filter} with the actuator lag model $\dot a_0=-(a_0-u_0)/\tau_a$ and $\dot v_0=a_0$, the closed-loop dynamics from $v_0^{\mathrm{cmd}}$ to the actual speed $v_0$ are
\begin{equation}
\label{eq:lead-control}
\ddot v_0 + \frac{1}{\tau_a}\dot v_0 + \frac{1}{\tau_a\tau_{\mathrm{lead}}}v_0
= \frac{1}{\tau_a\tau_{\mathrm{lead}}}v_0^{\mathrm{cmd}},
\end{equation}
a standard second order ordinary differential equation (ODE) with natural frequency 
$\omega_n=\sqrt{1/(\tau_a\tau_{\mathrm{lead}})}$ 
and damping ratio 
$\zeta=\tfrac{1}{2}\sqrt{\tau_{\mathrm{lead}}/\tau_a}$.
Such parameterization is desirable because $(\zeta,\omega_n)$ provide direct control over transient quality—overshoot, settling time, and oscillatory behavior—without requiring numerical optimization or ad-hoc loop testing.
By choosing $\tau_{\mathrm{lead}}\!\ge 4\tau_a$, the system becomes well-damped (critical damping at $\tau_{\mathrm{lead}}=4\tau_a$), producing a smooth monotonic convergence of $v_0$ to $v_0^{\mathrm{cmd}}$ and, by extension, to $v^\star$.

This simple proportional law yields desirable properties for cooperative operation: (i) No overshoot: choosing $\tau_{\mathrm{lead}}\!\ge 4\tau_a$ ensures a non-oscillatory leader response; (ii) String-friendliness: larger $\tau_{\mathrm{lead}}$ values effectively low-pass filter the leader's speed changes, preventing amplification of high-frequency content along the platoon; (iii) Responsiveness: $\tau_{\mathrm{lead}}$ must remain small enough for the leader to realize operational speed adjustments; practically, the constraints $|u_0|\!\le a_{\max}$ and $|\dot u_0|\!\le \dot a_{\max}$ are enforced.

Because the followers’ spacing controllers already regulate inter-vehicle spacing, a well-damped leader dynamics avoids injecting rapid acceleration or braking impulses that would force the downstream PID loops to compensate aggressively.  
Hence, the first-order speed-servo policy~\eqref{eq:lead-filter}–\eqref{eq:lead-control} ensures smooth platoon-level transients and provides a realistic, implementable interface between the macroscopic cruise target $v^\star$ and the microscopic spacing control of the individual vehicles. 

\subsection{Gain Tuning for Lagged Actuators}
\label{subsec:tuning}
The spacing-regulation layer introduced above relies on a PID structure that compensates deviations in spacing and relative speed while respecting the actuator dynamics of heavy-duty trucks. Since the realized acceleration $a_i$ follows the commanded acceleration $u_i$ only through the first-order lag \eqref{eq:actuation_lag}
direct empirical tuning of $(K_P,K_I,K_D)$ can be challenging: increasing gains to improve responsiveness may unintentionally excite the lag pole and create oscillations. To obtain predictable closed-loop characteristics, we use an analytic tuning procedure that explicitly accounts for~\eqref{eq:actuation_lag} while preserving the simplicity of PID control.

We adopt the constant time–gap spacing error 
\eqref{eq:eitg} so that safety and regulation are expressed directly in $e_i$. Differentiating \eqref{eq:eitg} gives the basic error kinematics
\begin{equation}
\dot e_i \;=\; (v_{i-1}-v_i)-\tau\,a_i,
\label{eq:edot_basic}
\end{equation}
where $a_i$ is the \emph{actual} vehicle acceleration. With the first-order actuation lag in \eqref{eq:actuation_lag},
the commanded input $u_i$ is filtered by the actuator with time constant $\tau_a>0$. Applying \eqref{eq:pid-general} to the spacing error $e_i$ yields a PID controller of the form
\begin{equation}
u_i = K_p e_i + K_i\int e_i\,dt + K_d(v_{i-1}-v_i),
\label{eq:pid}
\end{equation}
which requires only onboard sensing of spacing and speed, together with the predecessor's speed $v_{i-1}$ received via V2V communication.

The objective is to shape the spacing-error dynamics of the lagged system into a second-order differential equation in $e_i$ with user-defined damping ratio $\zeta$ and natural frequency $\omega_n$. These second-order parameters can be mapped directly to the control gains $(K_P, K_I, K_D)$ through their characteristic equation, enabling systematic tuning based on desired transient specifications. Substituting \eqref{eq:pid} into \eqref{eq:edot_basic} yields
\begin{align}
\dot e_i
&= \big(1-\tau K_d\big)(v_{i-1}-v_i)
   - \tau K_p e_i
   - \tau K_i \!\int e_i\,dt \notag\\[2pt]
&\quad
   - \tau\,(a_i - u_i).
\label{eq:edot_with_r}
\end{align}
Define the actuator tracking error $r_i:=a_i-u_i$. From \eqref{eq:actuation_lag} we get
\begin{equation}
\tau_a\,\dot r_i + r_i \;=\; -\,\tau_a\,\dot u_i,
\qquad\Rightarrow\qquad 
\|r_i\|_\infty \;\le\; \tau_a\,\|\dot u_i\|_\infty,
\label{eq:r_dynamics}
\end{equation}
so $r_i$ is small whenever the command $u_i$ varies slowly relative to the actuator time constant. Choosing
\begin{equation}
K_d \;=\; \frac{1}{\tau}
\label{eq:Kd_choice}
\end{equation}
cancels the relative–speed term in \eqref{eq:edot_with_r}, giving the closed–loop first–order error equation
\begin{equation}
\dot e_i \;=\; -\,\tau K_p e_i \;-\; \tau K_i\int e_i dt \;-\; \tau r_i.
\label{eq:edot_clean}
\end{equation}
Differentiating \eqref{eq:edot_clean} and using \eqref{eq:r_dynamics}, we obtain
\begin{equation}
\ddot e_i \;+\; \tau K_p\,\dot e_i \;+\; \tau K_i\,e_i \;=\; -\,\tau\,\dot r_i,
\label{eq:e_ddot_with_dist}
\end{equation}
i.e., the spacing error obeys a second–order ODE driven by the small disturbance $-\tau\dot r_i$. Under a standard time–scale separation assumption,
\begin{equation}
\varepsilon := \omega_n\,\tau_a \ll 1,
\label{eq:timescale}
\end{equation}
the actuator tracking error satisfies $r_i=\mathcal{O}(\varepsilon)$ and the right–hand side of \eqref{eq:e_ddot_with_dist} is negligible at the error–loop bandwidth. Under this approximation, the spacing-error dynamics reduce to the second-order form
\begin{equation}
\ddot e_i \;+\; \tau K_p\,\dot e_i \;+\; \tau K_i\,e_i \;=\; 0.
\label{eq:e_ddot_clean}
\end{equation}

Equation \eqref{eq:e_ddot_clean} has characteristic polynomial $s^2+\tau K_p s+\tau K_i$. Matching it to its target second-order form $s^2+2\zeta\omega_n s+\omega_n^2$ gives
\begin{equation}
K_p \;=\; \frac{2\zeta\omega_n}{\tau}, 
\qquad 
K_i \;=\; \frac{\omega_n^2}{\tau}, 
\qquad 
K_d \;=\; \frac{1}{\tau}.
\label{eq:gains}
\end{equation}
The only point at which the actuator lag influences the derivation is through the residual term $r_i$ in \eqref{eq:edot_clean}--\eqref{eq:e_ddot_with_dist}. Condition \eqref{eq:margin} ensures that $r_i$ and $\dot r_i$ remain small at the loop's bandwidth, so the residual acts as a bounded, high-frequency disturbance that the second-order model \eqref{eq:e_ddot_clean} effectively ignores. This approach preserves the transparent mapping from desired transient characteristics $(\zeta,\omega_n)$ to gains $(K_p,K_i,K_d)$ while remaining conservative with respect to actuator limits. If $\omega_n\tau_a$ approaches unity, simply reducing $\omega_n$ or slightly increasing $\tau$ until \eqref{eq:margin} is satisfied restores the design's validity without requiring structural modifications.

The damping ratio $\zeta$ primarily governs the smoothness of spacing recovery, while $\omega_n$ determines the overall responsiveness of the spacing loop relative to the actuator bandwidth. 
For heavy-duty vehicles, near-critical damping $\zeta\!\in\![0.8,1.2]$ has been repeatedly observed to eliminate overshoot and yield comfortable transients ~\cite{Ploeg14}. 
The natural frequency is typically chosen such that the closed-loop spacing dynamics remain slower than the actuator lag, ensuring that the residual term in~\eqref{eq:edot_clean}–\eqref{eq:e_ddot_with_dist} behaves as a bounded disturbance. 
A conservative guideline supported by experimental reports~\cite{Rajamani11} is
\begin{equation}
\omega_n\,\tau_a \;\le\; 0.25\text{--}0.35,
\label{eq:margin}
\end{equation}
which preserves adequate phase margin and avoids amplification of high-frequency actuator dynamics. 
For actuation time constants $\tau_a\!\in[0.3,0.6]\,\text{s}$, this range corresponds to $\omega_n\!\in[0.12,0.25]\,\text{s}^{-1}$ and settling times on the order of $8$–$20$\,s. 

\subsection*{Economic Optimization of Fuel and Travel Time}
The leader’s set-point $v^\star$ evolves on the slowest timescale of the hierarchy and is adjusted only when macroscopic information—such as an updated schedule, grade forecast, or regulatory speed change—becomes available.  Because these events occur irregularly and their timing is not known in advance, we formulate the outer layer as an event-triggered model predictive control (MPC) problem. 
Between events, the platoon is assumed to have settled to the quasi-steady equilibrium spacing $s_i^\star(v)=\eta+\tau v$ with negligible accelerations $a_i\!\approx\!0$, so that the system can be represented by its stationary fuel-flow characteristics. 
At each update, the leader solves a one-dimensional optimization problem that minimizes the cumulative fuel and travel-time cost over the remaining trip distance $D_{\mathrm{rem}}$. This modeling choice enables computing $v^\star$ from a one-dimensional problem with minimal additional communication: only the leader requires information about scheduling changes, since its control input is directly affected by the choice of $v^\ast$. This information propagates downstream implicitly through the spacing controllers as followers regulate their gaps relative to the leader's changing speed, requiring no additional communication signals.

Let $\dot m_{f,i}^{\mathrm{ss}}(v)$ denote the steady-state fuel rate of vehicle~$i$, obtained from the tractive-power model~\eqref{eq:tractive-power} under the average grade $\bar\theta(x)$ and the aerodynamic interaction law~\eqref{eq:saturation}. 
The economic signal we optimize over the remaining distance is then
\begin{align}
J(v)
&= \int_0^{D_{\mathrm{rem}}}\!\!\Bigg[
\frac{\lambda_f}{v}\!
   \sum_{i=0}^{N}\!\dot m_{f,i}^{\mathrm{ss}}\!\big(v,\bar\theta(x)\big)
\;+\; \frac{\lambda_t}{v}
\Bigg] dx .
\label{eq:J_mpc}
\end{align}
where $\lambda_f,\lambda_t\!\ge0$ weight fuel and time cost, respectively.
Because the integrand depends on the spatial coordinate $x$ through grade or regulatory constraints, the optimization is effectively carried out along the route rather than in time.

The optimal set-point is the solution of the scalar program
\begin{equation}
v_k^\star=\arg\min_{v\in[v_{\min},\,v_{\max}]}\;J_k(v),
\label{eq:vkstar}
\end{equation}
which is smooth and typically unimodal on $[v_{\min},v_{\max}]$ because $J_k$ combines a convex-in-$v$ time penalty with a monotonically increasing fuel term driven by $v$ and $v^3$ in \eqref{eq:tractive-power}–\eqref{eq:saturation}. Practically, $v_k^\star$ is obtained by a Newton or golden-section search initialized at the previous value of $v^\ast$.

\section{Robustness Analysis}
\label{sec:analysis}

This section establishes key properties of the closed-loop system under normal operating conditions. Because the CBF constraint \eqref{eq:cbf} is active only during critical safety events, we analyze the nominal dynamics with the barrier inactive. We further assume that gains are chosen per the tuning procedure in Subsection~\ref{subsec:tuning}: $K_D = 1/\tau$ and $K_I, K_P \geq 0$. We first establish convergence to the Optimal Velocity with Relative Velocities (OVRV) car-following model under steady-state conditions. The OVRV framework, widely adopted in autonomous vehicle traffic flow analysis \cite{Helbing1998, Treiber2013}, prescribes acceleration according to
\[
a_i = k_1\,[s_i-\eta-\tau v_i] + k_2\,(v_{i-1}-v_i),
\]
i.e., $a_i = k_1 e_i + k_2 \Delta v_i$.

To establish convergence to the OVRV equilibrium, we first demonstrate that the actuator tracking error remains uniformly small under the established assumptions:

\begin{lemma}[Uniform small-lag approximation $a_i \approx u_i$.]
Assume $u_i$ is continuously differentiable on $[0,\infty)$ with finite bounds
\[
\|u_i\|_\infty \;\le\; U,\qquad \|\dot u_i\|_\infty \;\le\; \omega_n\,U,
\]
for some $U>0$ and bandwidth parameter $\omega_n>0$. If the actuator is engaged consistently, $a_i(0)=u_i(0)$, then the tracking error $r_i:=a_i-u_i$ obeys the uniform bound
\begin{equation}
\|a_i-u_i\|_\infty \;\le\; \tau_a \,\|\dot u_i\|_\infty \;\le\; (\omega_n\tau_a)\,U.
\label{eq:uniform_bound}
\end{equation}
Equivalently, with $\varepsilon := \omega_n\tau_a$, one has
\begin{equation}
a_i(t)\;=\;u_i(t) + \mathcal{O}(\varepsilon)\quad\text{uniformly on }[0,\infty).
\end{equation}
\end{lemma}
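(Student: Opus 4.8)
The plan is to recognize that the claim concerns a single scalar, exponentially stable, first-order linear filter driven by $\dot u_i$, and to prove it by writing the tracking error explicitly as a convolution and estimating that convolution in the $L^\infty$ norm. First I would recall from \eqref{eq:actuation_lag}–\eqref{eq:r_dynamics} that $r_i:=a_i-u_i$ satisfies the linear ODE $\dot r_i + \tau_a^{-1}r_i = -\dot u_i$, and that the consistent-engagement hypothesis $a_i(0)=u_i(0)$ supplies the zero initial condition $r_i(0)=0$. Since $u_i\in C^1[0,\infty)$ by assumption, $\dot u_i$ is a well-defined, bounded, continuous forcing term, so this initial-value problem has a unique solution on $[0,\infty)$.

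Next I would solve it with the integrating factor $e^{t/\tau_a}$, obtaining the variation-of-parameters representation
\begin{equation}
r_i(t) \;=\; -\int_0^t e^{-(t-s)/\tau_a}\,\dot u_i(s)\,ds .
\label{eq:ri_convolution}
\end{equation}
Taking absolute values, bounding $|\dot u_i(s)|\le\|\dot u_i\|_\infty$ pointwise, and evaluating the remaining integral gives
\begin{equation}
|r_i(t)| \;\le\; \|\dot u_i\|_\infty \int_0^t e^{-(t-s)/\tau_a}\,ds \;=\; \tau_a\bigl(1 - e^{-t/\tau_a}\bigr)\,\|\dot u_i\|_\infty \;\le\; \tau_a\,\|\dot u_i\|_\infty ,
\end{equation}
where the last step uses $1 - e^{-t/\tau_a}\le 1$ for all $t\ge 0$, which is precisely what makes the bound uniform in $t$. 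Taking the supremum over $t$ yields the first inequality in \eqref{eq:uniform_bound}; substituting the hypothesis $\|\dot u_i\|_\infty\le\omega_n U$ yields the second, and writing $\varepsilon:=\omega_n\tau_a$ restates this as $a_i(t)=u_i(t)+\mathcal{O}(\varepsilon)$ uniformly on $[0,\infty)$.

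Regarding the main obstacle: this is essentially a textbook estimate for a stable first-order system, and none of the steps is technically delicate. The only points deserving a word of care are (i) justifying $r_i(0)=0$, which I attribute to the ``actuator engaged consistently'' hypothesis and which would otherwise leave a decaying-but-nonzero transient $r_i(0)\,e^{-t/\tau_a}$; and (ii) confirming the estimate is genuinely uniform rather than merely finite-horizon, which follows because the convolution kernel $s\mapsto e^{-(t-s)/\tau_a}$ has $L^1$-mass at most $\tau_a$ independently of $t$. Equivalently one may bypass the explicit solution: the Lyapunov function $V=\tfrac12 r_i^2$ obeys $\dot V = -\tau_a^{-1}r_i^2 - r_i\dot u_i \le |r_i|\bigl(\|\dot u_i\|_\infty - \tau_a^{-1}|r_i|\bigr)$, so $\dot V<0$ whenever $|r_i|>\tau_a\|\dot u_i\|_\infty$, giving the same forward-invariant bound. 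I would present the convolution version, since it additionally records the exact transient envelope $\tau_a(1-e^{-t/\tau_a})$ and makes the uniformity self-evident.
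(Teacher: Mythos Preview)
Your proof is correct and follows essentially the same approach as the paper: derive the scalar ODE $\tau_a\dot r_i + r_i = -\tau_a\dot u_i$ for the tracking error, use the integrating factor $e^{t/\tau_a}$ together with $r_i(0)=0$ to obtain the exact convolution formula $r_i(t)=-\int_0^t e^{-(t-s)/\tau_a}\dot u_i(s)\,ds$, and then bound it by $\tau_a(1-e^{-t/\tau_a})\|\dot u_i\|_\infty\le\tau_a\|\dot u_i\|_\infty$. Your additional Lyapunov remark is a nice alternative justification but is not in the paper.
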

\begin{proof}
Subtract $u_i$ from both sides of \eqref{eq:actuation_lag} and define $r_i:=a_i-u_i$. Since $a_i=u_i+r_i$,
\[
\tau_a(\dot r_i+\dot u_i) + (u_i+r_i) = u_i
\quad\Longrightarrow\quad
\tau_a \dot r_i + r_i = -\,\tau_a \dot u_i.
\]
This is a linear scalar ODE with constant coefficients. Multiplying by the integrating factor $e^{t/\tau_a}$ gives
\[
\frac{d}{dt}\!\left(e^{t/\tau_a} r_i(t)\right) = -\,e^{t/\tau_a}\,\dot u_i(t).
\]
Integrating from $0$ to $t$ and using $r_i(0)=a_i(0)-u_i(0)=0$ yields the exact variation–of–constants formula
\begin{equation}
r_i(t) = -\!\int_0^t e^{-(t-s)/\tau_a}\,\dot u_i(s)\,ds.
\label{eq:exact_r}
\end{equation}
Taking absolute values and applying the uniform bound on $\dot u_i$,
\begin{equation}
\begin{aligned}
|r_i(t)| &\le \int_0^t e^{-(t-s)/\tau_a}\,|\dot u_i(s)|\,ds
          \le \|\dot u_i\|_\infty \int_0^t e^{-(t-s)/\tau_a}\,ds \\
&\le \|\dot u_i\|_\infty\,\tau_a\!\left(1-e^{-t/\tau_a}\right).
\end{aligned}
\end{equation}
Since $0<1-e^{-t/\tau_a}\le 1$ for all $t\ge 0$, we obtain the uniform-in-time estimate
\[
|r_i(t)| \;\le\; \tau_a \|\dot u_i\|_\infty
\quad\text{for all }t\ge 0,
\]
which proves the first inequality in \eqref{eq:uniform_bound}. Using $\|\dot u_i\|_\infty \le \omega_n U$ then gives $|r_i(t)|\le (\omega_n\tau_a)\,U=\varepsilon U$, i.e., $a_i(t)=u_i(t)+\mathcal{O}(\varepsilon)$ uniformly. 
\end{proof}

Stabilization to the OVRV equilibrium can now be established as follows:

\begin{proposition}[Convergence to OVRV equilibrium]
\emph{Fix $\tau>0$ and gains $K_p>0$, $K_i>0$, $K_d=\frac{1}{\tau}$. Suppose the leader moves at constant speed $v_0(t)\equiv v^\star$ for $t\ge t_0$.
Assume time–scale separation $\varepsilon:=\omega_n\tau_a\ll 1$ so that $a_i(t)=u_i(t)+\mathcal O(\varepsilon)$ uniformly. Then, for each follower $i\ge 1$ the trajectories satisfy}
\begin{equation}
\begin{aligned}
e_i(t)\xrightarrow[t\to\infty]{}0, \qquad
\Delta v_i(t)\xrightarrow[t\to\infty]{}0,\\[3pt]
\text{hence}\quad
(s_i(t),v_i(t))\xrightarrow[t\to\infty]{}\big(\eta+\tau v^\star,\; v^\star\big).
\end{aligned}
\end{equation}
\emph{In particular, the closed–loop platoon converges to the OVRV constant–time–gap equilibrium.}
\end{proposition}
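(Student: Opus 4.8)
The plan is to argue by induction on the follower index $i$, exploiting the fact that the tuning choice $K_d=1/\tau$ decouples the spacing-error loop from the predecessor's velocity. Carry as the inductive hypothesis $H(i-1)$ the two statements $v_{i-1}(t)\to v^\star$ and $a_{i-1}(t)\to 0$ as $t\to\infty$. The base case $H(0)$ is immediate: by assumption the leader satisfies $v_0(t)\equiv v^\star$ for $t\ge t_0$, hence $a_0\equiv 0$. Throughout I would work with the errors $e_i$ of \eqref{eq:eitg} and $\Delta v_i$, and invoke the preceding small-lag lemma to write $a_i=u_i+r_i$ with $\|r_i\|_\infty=\mathcal O(\varepsilon)$ uniformly.

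For the inductive step I would proceed in three stages. \emph{Stage 1 (error loop).} Substitute the PID law \eqref{eq:pid} into the error kinematics \eqref{eq:edot_basic}, replace $a_i$ by $u_i$ up to the $\mathcal O(\varepsilon)$ residual, and use $K_d=1/\tau$ to cancel the $(v_{i-1}-v_i)$ term; this is exactly the reduction leading to \eqref{eq:e_ddot_clean}, giving the autonomous Hurwitz dynamics $\ddot e_i+\tau K_p\dot e_i+\tau K_i e_i=0$. Since $\tau K_p,\tau K_i>0$, both $e_i(t)\to 0$ and $\dot e_i(t)\to 0$ exponentially, \emph{independently of the predecessor}; and since $\dot e_i=-\tau K_p e_i-\tau K_i\!\int e_i\,d\xi$, the integral state satisfies $\int_0^t e_i\,d\xi=-(\dot e_i+\tau K_p e_i)/(\tau K_i)\to 0$. \emph{Stage 2 (relative speed).} Differentiating $\Delta v_i=v_{i-1}-v_i$ gives $\dot{\Delta v}_i=a_{i-1}-a_i$, and eliminating $a_i$ with $\dot e_i=\Delta v_i-\tau a_i$ yields the stable first-order relation $\tau\,\dot{\Delta v}_i+\Delta v_i=\dot e_i+\tau a_{i-1}$; by $H(i-1)$ and Stage 1 the right-hand side tends to $0$, so $\Delta v_i(t)\to 0$. \emph{Stage 3 (closing the induction).} Then $v_i=v_{i-1}-\Delta v_i\to v^\star$ by $H(i-1)$, and $a_i=u_i=K_p e_i+K_i\!\int e_i\,d\xi+K_d\Delta v_i\to 0$, establishing $H(i)$. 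Finally $s_i=e_i+\eta+\tau v_i\to\eta+\tau v^\star$, the OVRV constant-time-gap equilibrium, and the OVRV law $a_i=k_1 e_i+k_2\Delta v_i$ evaluated at this limit returns $a_i=0$, consistent with equilibrium.

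The main obstacle is controlling the $\mathcal O(\varepsilon)$ actuator residual $r_i=a_i-u_i$: with it retained the error loop reads $\ddot e_i+\tau K_p\dot e_i+\tau K_i e_i=-\tau\dot r_i$ rather than the clean homogeneous form, so strictly one gets only practical (ISS) convergence to an $\mathcal O(\varepsilon)$ neighbourhood. I would resolve this in the manner the proposition's hypothesis invites — work in the singular limit $\varepsilon\to 0$, where the reduced slow subsystem in $(\int e_i,\,e_i,\,\Delta v_i)$ is exponentially stable — and note that a fully rigorous transfer to the exact dynamics follows from Tikhonov-type singular-perturbation theory for $\tau_a$ small enough. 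Equivalently, one can close the loop self-consistently: the exact per-follower dynamics $(\,\int e_i,\,e_i,\,\Delta v_i,\,a_i)$ form a linear time-invariant system forced only by $a_{i-1}(t)$, whose system matrix is Hurwitz under the margin condition \eqref{eq:margin}; once the slow states settle one has $\dot u_i\to 0$, hence $r_i\to 0$ by the small-lag lemma, which upgrades the conclusion from an $\mathcal O(\varepsilon)$ ball to exact asymptotic convergence. I would present the statement under this idealization and flag the $\mathcal O(\varepsilon)$ caveat explicitly.
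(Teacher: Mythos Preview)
Your proposal is correct and follows essentially the same three-stage inductive argument as the paper: obtain the Hurwitz second-order ODE for $e_i$ via the $K_d=1/\tau$ cancellation, show $\Delta v_i$ obeys a stable first-order filter driven by vanishing inputs, and induct along the string using $a_{i-1}\to 0$. Your Stage~2 identity $\tau\dot{\Delta v}_i+\Delta v_i=\dot e_i+\tau a_{i-1}$ is algebraically equivalent to the paper's filter $\dot{\Delta v}_i+\tfrac{1}{\tau}\Delta v_i=-K_p e_i-K_i z_i+a_{i-1}$, and your explicit discussion of the $\mathcal O(\varepsilon)$ residual (ISS to a ball, Tikhonov, or Hurwitz analysis of the exact LTI cascade) is, if anything, more careful than the paper's treatment, which simply asserts that the forcing ``vanishes as $\varepsilon\to 0$'' and hence $e_i\to 0$ ``in the stated regime.''
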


\begin{proof}
We first analyze a single follower $i$ and then conclude by induction along the string.

\underline{Step 1: error dynamics.} With $a_i=u_i+\mathcal{O}(\varepsilon)$ and $K_d=\frac1\tau$, the kinematics $\dot e_i=\Delta v_i-\tau a_i$ gives
\begin{equation}
\begin{aligned}
\dot e_i
&=\; \Delta v_i-\tau\!\left(K_p e_i+K_i z_i+\tfrac{1}{\tau}\Delta v_i\right)+\mathcal O(\varepsilon)\\[3pt]
&=\; -\,\tau K_p e_i - \tau K_i z_i +\mathcal O(\varepsilon).
\end{aligned}
\end{equation}
Differentiating and using $\dot z_i=e_i$ yields the second–order linear ODE
\begin{equation}\label{eq:second_order_err}
\ddot e_i + (\tau K_p)\,\dot e_i + (\tau K_i)\,e_i \;=\; \mathcal O(\varepsilon).
\end{equation}
Because $K_p,K_i>0$, the homogeneous part has characteristic polynomial $\lambda^2+\tau K_p\lambda+\tau K_i$ with strictly negative real roots. Hence the homogeneous solution decays exponentially. The forcing term $\mathcal O(\varepsilon)$ is uniformly bounded and vanishes as $\varepsilon\to0$, so by variation of constants $e_i(t)$ converges exponentially to a ball of radius $\mathcal O(\varepsilon)$ and therefore to $0$ in the stated regime.

\underline{Step 2: relative speed and convergence of $(s_i,v_i)$.}
From $\dot e_i=\Delta v_i-\tau u_i+\mathcal O(\varepsilon)$ and the identity above (with $K_d=\frac1\tau$), we have
\[
\dot e_i + \tau K_p e_i + \tau K_i z_i = \mathcal O(\varepsilon).
\]
Because $e_i\to 0$ and $\dot e_i\to 0$, the left–hand side tends to $0$, which implies $z_i$ remains bounded and converges to a finite constant. Next, for the first follower of a constant–speed leader, $\dot{\Delta v}_i = a_{i-1}-a_i = -a_i = -u_i +\mathcal O(\varepsilon)$. Since $u_i = K_p e_i + K_i z_i + \frac{1}{\tau}\Delta v_i$, the closed equation for $\Delta v_i$ is
\[
\dot{\Delta v}_i + \frac{1}{\tau}\Delta v_i \;=\; -K_p e_i - K_i z_i + \mathcal O(\varepsilon).
\]
The right–hand side converges to a finite limit (indeed, to $0$ as $e_i\to0$ and $z_i$ converges), and the left–hand side is a first–order stable filter; thus $\Delta v_i(t)\to 0$ exponentially. Finally, $e_i\to0$ implies $s_i-\eta-\tau v_i\to 0$, and $\Delta v_i\to 0$ implies $v_i\to v_{i-1}=v^\star$, hence $s_i\to \eta+\tau v^\star$.

\underline{Step 3: induction along the string.}
Assume the statement holds for vehicle $i-1$ (so $v_{i-1}(t)\to v^\star$ and $a_{i-1}(t)\to 0$). Repeating Step~2 with that $a_{i-1}\to0$ yields $\Delta v_i\to0$ and hence the same limit for $(s_i,v_i)$. Therefore, by induction, all followers converge to the OVRV equilibrium. \end{proof}

We next analyze transients under a commanded leader speed change $\Delta v$. We show that the spacing error $|e_i|$ during synchronization is bounded by the jerk $|\dot{a}_i|$ and decays exponentially. We first establish the following upper bound on the jerk:
\begin{lemma}[Bounded jerk]
\label{lem:jerk_bound}
Assume the same actuator $a_i$ and controller $u_i$ as in Section~III with $K_d=\frac{1}{\tau}$, $K_p,K_i>0$, and keep the CBF inactive. Let the leader’s commanded speed be generated by the first-order filter \eqref{eq:lead-filter}, so that $v_0^{\rm cmd}$ is $C^\infty$. Consider a commanded step of magnitude $\Delta v^{\rm cmd}>0$ at time $t_0$.

\smallskip
\noindent\emph{Leader (realized signal).} With the second-order leader servo \eqref{eq:lead-filter} and an overdamped choice of $T_l$, the realized jerk is bounded by
\begin{equation}
\label{eq:leader_realized_jerk_bound}
\max_{t\ge t_0}|j_0(t)| \;\le\; \frac{\Delta v^{\rm cmd}}{\tau_a T_\ell}.
\end{equation}

\noindent\emph{Followers.} For each follower $i\ge 1$ the jerk is $j_i=\dot a_i=-\,r_i/\tau_a$. There exist computable constants $C_1>0$ and
\[
C_2(\zeta)=
\begin{cases}
1, & \zeta\ge 1\quad\text{(over/critically damped)},\\[3pt]
\dfrac{1}{\zeta\sqrt{1-\zeta^2}}, & 0<\zeta<1\quad\text{(underdamped)},
\end{cases}
\]
depending only on $(K_p,K_i,\tau)$ via the second-order parameters $(\zeta,\omega_n)$ defined below, such that
\begin{equation}
\label{eq:follower_jerk_bound_general}
\begin{aligned}
\|j_i\|_\infty \;\le\;&\;
\frac{|r_i(t_0)|}{\tau_a}
+\frac{C_2(\zeta)}{\tau\,\tau_a K_i}\!\left(\frac{|r_i(t_0)|}{\tau_a}+c_1 J_{\max}+c_2 a_{\max}\right)
\\[3pt]
&\;+\;\frac{C_1}{\tau\,\tau_a}\big(|e_i(t_0)|+|\dot e_i(t_0)|\big).
\end{aligned}
\end{equation}
where $a_{\max}:=\|a_{i-1}\|_\infty$, $J_{\max}:=\|\dot a_{i-1}\|_\infty$, and $c_1,c_2>0$ are design-dependent constants (functions of $K_p,K_i,\tau,\tau_a$ and the damping choice). In particular, under the speed step generated by \eqref{eq:lead-filter} one may take
\begin{equation}
\label{eq:follower_jerk_step_closedform}
\begin{aligned}
\|j_i\|_\infty \;\le\;&\;
\frac{|r_i(t_0)|}{\tau_a}
+\frac{C_2(\zeta)}{\tau\,\tau_a K_i}
\!\left(\frac{|r_i(t_0)|}{\tau_a}
+ c_1\frac{\Delta v^{\rm cmd}}{T_\ell^2}\right)
\\[3pt]
&\;+\;\frac{C_2(\zeta)}{\tau\,\tau_a K_i}
\,c_2\frac{\Delta v^{\rm cmd}}{T_\ell}
+\frac{C_1}{\tau\,\tau_a}\big(|e_i(t_0)|+|\dot e_i(t_0)|\big).
\end{aligned}
\end{equation}
\end{lemma}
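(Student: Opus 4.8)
The plan is to treat the jerk bounds as consequences of the variation-of-constants representation already used in the small-lag Lemma, combined with the explicit second-order structure \eqref{eq:e_ddot_clean}--\eqref{eq:e_ddot_with_dist}. For the leader, I would start from the realized dynamics \eqref{eq:lead-control}, differentiate once to get an ODE in $j_0=\dddot v_0$ driven by $\dddot v_0^{\mathrm{cmd}}$, and note that the command filter \eqref{eq:lead-filter} makes $\dddot v_0^{\mathrm{cmd}}$ decay like $e^{-(t-t_0)/T_\ell}$ with peak magnitude $\Delta v^{\mathrm{cmd}}/T_\ell^{3}$; but since the target bound \eqref{eq:leader_realized_jerk_bound} is only $\Delta v^{\mathrm{cmd}}/(\tau_a T_\ell)$, the cleaner route is to use $j_0=\dot a_0 = -r_0/\tau_a$ (from the actuator lag with $u_0$ the clipped leader command) together with the uniform bound $\|r_0\|_\infty\le \tau_a\|\dot u_0\|_\infty$ from the small-lag Lemma, and the jerk bound \eqref{eq:max_jerk} which gives $\|\dot u_0\|_\infty = \|\ddot v_0^{\mathrm{cmd}}\|_\infty \le \Delta v^{\mathrm{cmd}}/T_\ell^{2}$; chaining these yields $|j_0|\le \|\dot u_0\|_\infty \le \Delta v^{\mathrm{cmd}}/T_\ell^2$, which is stronger than claimed, so I would simply remark that \eqref{eq:leader_realized_jerk_bound} follows a fortiori under the overdamped choice (where an extra factor $T_\ell/\tau_a\ge 1$ is harmless).

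For the followers, the identity $j_i=\dot a_i=-r_i/\tau_a$ follows directly from \eqref{eq:actuation_lag} once we write $\dot a_i = -(a_i-u_i)/\tau_a = -r_i/\tau_a$. So everything reduces to bounding $\|r_i\|_\infty$. From \eqref{eq:r_dynamics}, $\tau_a\dot r_i + r_i = -\tau_a\dot u_i$, and $\dot u_i = K_p\dot e_i + K_i e_i + K_d\dot{(\Delta v_i)} = K_p\dot e_i + K_i e_i + \frac{1}{\tau}(a_{i-1}-a_i)$. The terms $a_{i-1}$ and $\dot a_{i-1}$ enter through differentiating once more or through the variation-of-constants kernel, producing the $a_{\max}$ and $J_{\max}$ contributions. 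The key step is then to bound $\|e_i\|_\infty$ and $\|\dot e_i\|_\infty$: here I would invoke the second-order ODE \eqref{eq:e_ddot_with_dist}, $\ddot e_i + \tau K_p\dot e_i + \tau K_i e_i = -\tau\dot r_i$, and use the standard impulse/step-response estimates for a damped second-order system — the factor $C_2(\zeta)$ is exactly the peak gain of the second-order transfer function $1/(s^2+2\zeta\omega_n s+\omega_n^2)$, which equals $1/\omega_n^2$ when $\zeta\ge 1$ and $1/(\omega_n^2\zeta\sqrt{1-\zeta^2})$ in the underdamped case (after converting $\tau K_i=\omega_n^2$). Combining the homogeneous decay from initial data $(e_i(t_0),\dot e_i(t_0),r_i(t_0))$ with the forced response to the predecessor's acceleration profile gives \eqref{eq:follower_jerk_bound_general}; substituting the leader-filter bounds $\|a_{i-1}\|_\infty$ and $\|\dot a_{i-1}\|_\infty$ — which by induction down the string are controlled by $\Delta v^{\mathrm{cmd}}/T_\ell$ and $\Delta v^{\mathrm{cmd}}/T_\ell^{2}$ respectively via \eqref{eq:leader_realized_jerk_bound} and \eqref{eq:max_jerk} — yields the closed-form \eqref{eq:follower_jerk_step_closedform}.

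The main obstacle I anticipate is the bookkeeping in the forced-response bound: the right-hand side $-\tau\dot r_i$ of \eqref{eq:e_ddot_with_dist} depends on $\dot u_i$, which itself depends on $e_i$, $\dot e_i$, and $a_i$, so there is an implicit loop between the bound on $\|r_i\|_\infty$ and the bound on $\|e_i\|_\infty,\|\dot e_i\|_\infty$. I would close this loop by treating $r_i$ as $\mathcal{O}(\varepsilon)$ via the small-lag Lemma (which is legitimate here since the hypotheses of that Lemma — a $C^1$, bounded $u_i$ with $\|\dot u_i\|_\infty\le\omega_n U$ — are met during the commanded step because the excitation is the smooth filtered leader speed), so that the self-coupling term is a higher-order correction absorbed into the constants $c_1,c_2,C_1$; this is why the statement only asserts the existence of "computable" design-dependent constants rather than sharp values. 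A secondary nuisance is keeping the $\zeta\ge 1$ versus $0<\zeta<1$ cases aligned with the exact peak-overshoot formulas for the second-order kernel, but this is a textbook computation and I would only state the result, noting that $C_2(\zeta)$ is continuous and equal to $1$ at $\zeta=1$.
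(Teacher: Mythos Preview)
Your follower argument is essentially the paper's: both reduce $j_i=-r_i/\tau_a$ to a bound on $r_i$, expand $\dot u_i=K_p\dot e_i+K_i e_i+\tfrac{1}{\tau}(a_{i-1}-a_i)$, feed it through the forced second-order error ODE \eqref{eq:e_ddot_with_dist}, and read off the factor $C_2(\zeta)/(\tau K_i)$ from the fundamental solution. One correction: $C_2(\zeta)$ is not the frequency-domain peak gain but the $L^1$ norm of the Green's function $g$ of $\ddot e+2\zeta\omega_n\dot e+\omega_n^2 e=\delta$ (equivalently, the $L^\infty\!\to\!L^\infty$ induced gain); the paper computes $\|g\|_{L^1}=1/\omega_n^2$ for $\zeta\ge 1$ and bounds it by $1/(\zeta\sqrt{1-\zeta^2}\,\omega_n^2)$ for $0<\zeta<1$, which coincides with your formulas. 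For the implicit loop between $r_i$ and $(e_i,\dot e_i)$, the paper closes it by applying Gr\"onwall to $\tau_a\dot r_i+r_i=-\tau_a\dot u_i$ rather than by invoking the small-lag lemma. Your $\mathcal{O}(\varepsilon)$-absorption is circular as written: the small-lag lemma \emph{presupposes} an a~priori bound on $\|\dot u_i\|_\infty$, which is precisely the quantity you are trying to control, so you need Gr\"onwall or a small-gain contraction to make the constants $c_1,c_2$ genuinely computable.

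Your leader argument has a real gap. The second-order servo \eqref{eq:lead-control} arises because the leader's commanded acceleration is the \emph{feedback} law $u_0=\tfrac{1}{T_\ell}(v_0^{\rm cmd}-v_0)$, not the open-loop feedforward $u_0=\dot v_0^{\rm cmd}$. With feedback, $\dot u_0=\tfrac{1}{T_\ell}(\dot v_0^{\rm cmd}-a_0)$ depends on the realized acceleration $a_0$, so the identification $\|\dot u_0\|_\infty=\|\ddot v_0^{\rm cmd}\|_\infty$ fails and your chain $|j_0|\le\|\dot u_0\|_\infty\le\Delta v^{\rm cmd}/T_\ell^{2}$ does not close without an independent bound on $a_0$. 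The paper obtains \eqref{eq:leader_realized_jerk_bound} by a different route: it writes the exact identity $j_0=\tfrac{1}{\tau_a T_\ell}(v_0^{\rm cmd}-v_0)-\tfrac{1}{\tau_a}a_0$ directly from \eqref{eq:lead-control}, observes that the overdamped transfer $G_2(s)=\tfrac{1/(\tau_a T_\ell)}{s^2+s/\tau_a+1/(\tau_a T_\ell)}$ has a nonnegative impulse response with unit integral, and uses positivity of the convolution kernel to get $0\le a_0\le\Delta v^{\rm cmd}/T_\ell$ and $0\le v_0^{\rm cmd}-v_0\le\Delta v^{\rm cmd}$, from which both signs of $j_0$ are bounded by $\Delta v^{\rm cmd}/(\tau_a T_\ell)$.
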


\begin{proof}
Rewriting \eqref{eq:lead-control} as
\[
\ddot v_0=\frac{1}{\tau_a T_\ell}\,(v_0^{\rm cmd}-v_0)-\frac{1}{\tau_a}\,\dot v_0,
\]
we obtain the exact identity
\begin{equation}
\label{eq:j_identity}
j_0(t)=\frac{1}{\tau_a T_\ell}\bigl(v_0^{\rm cmd}(t)-v_0(t)\bigr)\;-\;\frac{1}{\tau_a}\,a_0(t).
\end{equation}

Taking Laplace transforms of \eqref{eq:lead-control} with zero initial conditions yields the transfer function
\[
G_2(s) = \frac{1/(\tau_a T_\ell)}{s^2 + s/\tau_a + 1/(\tau_a T_\ell)} = \frac{\alpha\beta/T_\ell}{(s+\alpha)(s+\beta)},
\]
where $0 < \alpha \le \beta$, $\alpha + \beta = 1/\tau_a$, and $\alpha\beta = 1/(\tau_a T_\ell)$. For $T_\ell \ge 4\tau_a$ (over/critically damped), the impulse response $g_2(t) := \mathcal{L}^{-1}\{G_2\}=\frac{\alpha \beta / T_\ell}{\beta - \alpha} \left( e^{-\alpha t} - e^{-\beta t} \right)$ satisfies $g_2(t) \ge 0$ for all $t \ge 0$ and $\int_0^\infty g_2(t)\,dt = G_2(0) = 1$.

Since $a_0(t) = (g_2 * a_0^{\rm cmd})(t)$, where $*$ denotes convolution, and the commanded acceleration $a_0^{\rm cmd}(t) = (\Delta v^{\rm cmd}/T_\ell) e^{-t/T_\ell} \ge 0$, convolution with the nonnegative kernel $g_2$ yields
\[
0 \le a_0(t) \le \|a_0^{\rm cmd}\|_\infty = \frac{\Delta v^{\rm cmd}}{T_\ell}.
\]
Similarly, since $v_0 = g_2 * v_0^{\rm cmd}$ and $v_0^{\rm cmd}$ is nondecreasing, we obtain $v_0(t) \le v_0^{\rm cmd}(t)$.

From the identity \eqref{eq:j_identity}, $j_0(t) = \frac{1}{\tau_a T_\ell}(v_0^{\rm cmd}(t) - v_0(t)) - \frac{1}{\tau_a} a_0(t)$, both terms are bounded:
\[
j_0(t) \le \frac{\Delta v^{\rm cmd}}{\tau_a T_\ell} \quad\text{and}\quad j_0(t) \ge -\frac{\Delta v^{\rm cmd}}{\tau_a T_\ell},
\]
which proves \eqref{eq:leader_realized_jerk_bound}. 

For a follower, the actuator dynamics $\tau_a\dot r_i+r_i=-\tau_a \dot u_i$ imply
\begin{align}
r_i(t)
&= e^{-(t-t_0)/\tau_a}r_i(t_0)
   - \!\int_{t_0}^t e^{-(t-s)/\tau_a}\,\tau_a\dot u_i(s)\,ds, \\[3pt]
j_i(t)
&= -\,\frac{r_i(t)}{\tau_a}. \nonumber
\end{align}
Hence $\|j_i\|_\infty\le \frac{|r_i(t_0)|}{\tau_a}+\|\dot u_i\|_\infty$. Now
\begin{align}
\dot u_i
&= K_p\dot e_i + K_i e_i + \frac{1}{\tau}(a_{i-1} - a_i) \\[3pt]
&= K_p\dot e_i + K_i e_i
   + \frac{1}{\tau}a_{i-1}
   - \frac{1}{\tau}u_i
   - \frac{1}{\tau}r_i. \nonumber
\end{align}
Differentiating $\dot e_i=\Delta v_i-\tau a_i$ and substituting $a_i=u_i+r_i$ together with $u_i=K_p e_i+K_i z_i+\frac{1}{\tau}\Delta v_i$, $\dot z_i=e_i$ yields the forced second-order ODE 
\begin{equation}
\label{eq:forced_ode_spacing}
\ddot e_i + (\tau K_p)\,\dot e_i + (\tau K_i)\,e_i \;=\; -\,\tau\,\dot r_i.
\end{equation}
By the convolution solution formula for LTI ODEs \cite{BoyceDiPrima}, letting $a_1:=\tau K_p>0$, $a_0:=\tau K_i>0$, and denoting by $g$ the causal fundamental solution of $\ddot e+a_1\dot e+a_0 e=\delta$, one has
\[
e_i(t)=\underbrace{\begin{bmatrix}1&0\end{bmatrix}e^{A(t-t_0)}\!\begin{bmatrix}e_i(t_0)\\ \dot e_i(t_0)\end{bmatrix}}_{\text{homogeneous term}}
\;+\;\int_{t_0}^t g(t-s)\,(-\tau\dot r_i(s))\,ds,
\]
where $A=\begin{bmatrix}0&1\\-a_0&-a_1\end{bmatrix}$ is the companion matrix \cite{HornJohnson}. Since $A$ is Hurwitz for $a_0,a_1>0$, there exist $C_1,\alpha>0$ such that $\|e^{A t}\|\le C_1 e^{-\alpha t}$. Moreover, $\int_0^\infty g(t)\,dt=G(0)=1/a_0$ by Laplace evaluation with $G(s)=1/(s^2+a_1 s+a_0)$; in the overdamped case $g\ge0$ so $\|g\|_{L^1}=\int_0^\infty g=1/a_0$, and for $0<\zeta<1$ one has
\[
\int_0^\infty |g(t)|\,dt \;\le\; \frac{1}{\zeta\sqrt{1-\zeta^2}\,a_0},
\]
using $g(t)=\frac{1}{\omega_d}e^{-\zeta\omega_n t}\sin(\omega_d t)$ and the standard integral $\int_0^\infty e^{-\alpha t}|\sin \beta t|\,dt \le 1/\alpha$. Consequently,
\begin{align}
|e_i(t)|
&\le C_1 e^{-\alpha(t-t_0)}\big(|e_i(t_0)|+|\dot e_i(t_0)|\big)  \\[3pt]
&\quad +\; \frac{C_2(\zeta)}{a_0}\,
   \sup_{s\in[t_0,t]}|\tau\dot r_i(s)|. \nonumber
\end{align}
with $C_2(\zeta)$ as above. This gives uniform bounds on $e_i$ and $\dot e_i$ in terms of $\sup |\dot r_i|$. Finally, writing
\[
\dot r_i = -\frac{1}{\tau_a}r_i - K_p\dot e_i - K_i e_i - \frac{1}{\tau}a_{i-1} + \frac{1}{\tau}u_i + \frac{1}{\tau}r_i,
\]
and applying Grönwall’s inequality to the scalar filter $\tau_a\dot r_i+r_i=-\tau_a\dot u_i$ (with inputs $a_{i-1},\dot a_{i-1}$ and the already bounded $e_i,\dot e_i$) yields
\[
\sup_{t\ge t_0}|\dot r_i(t)| \;\le\; \frac{|r_i(t_0)|}{\tau_a} + c_1 J_{\max} + c_2 a_{\max}.
\]
Substituting this estimate in the bound for $e_i$, and then back into $\|\dot u_i\|_\infty\le K_p\|\dot e_i\|_\infty+K_i\|e_i\|_\infty+\frac{1}{\tau}(a_{\max}+\|a_i\|_\infty)$ with $\|a_i\|_\infty\le \|u_i\|_\infty+\|r_i\|_\infty$ closes the loop and gives \eqref{eq:follower_jerk_bound_general}. The specialization \eqref{eq:follower_jerk_step_closedform} follows by inserting the leader’s step values $J_{\max}=\Delta v^{\rm cmd}/T_\ell^2$ and $a_{\max}=\Delta v^{\rm cmd}/T_\ell$ from above.
\end{proof}

We can now quantify the worst-case spacing error during transients as follows:
\begin{proposition}[Soft Spacing Hierarchy]
\label{prop:soft_hierarchy}
Let the leader's acceleration $a_0$ be bounded and piecewise $C^1$ with bounded jerk $|\dot a_0|\le J_{\max}$. Then, for each follower $i\ge 1$, there exist constants $C_1,\alpha>0$ (depending only on $K_p,K_i,\tau$) and $C_2(\zeta)$ (depending only on the damping ratio $\zeta$) such that
\begin{equation}
\label{eq:soft_hierarchy_bound}
\begin{aligned}
|e_i(t)| \;\le\;&\;
C_1 e^{-\alpha (t-t_0)}\big(|e_i(t_0)|+|\dot e_i(t_0)|\big)
\\[3pt]
&\;+\;\frac{C_2(\zeta)}{\tau K_i}\,
\sup_{s\in[t_0,t]}|\tau \dot r_i(s)|.
\end{aligned}
\end{equation}
Moreover, if $|a_{i-1}(t)|\le a_{\max}$ and $|\dot a_{i-1}(t)|\le J_{\max}$, then there exist computable constants $c_1,c_2$ (depending on $K_p,K_i,\tau,\tau_a$) such that
\begin{equation}
\label{eq:r_dot_bound}
\sup_{t\ge t_0}|\dot r_i(t)| \;\le\; \frac{|r_i(t_0)|}{\tau_a}\;+\;c_1 J_{\max}\;+\;c_2 a_{\max}.
\end{equation}
Substituting \eqref{eq:r_dot_bound} into \eqref{eq:soft_hierarchy_bound} yields the explicit worst-case bound
\begin{align}
\label{eq:e_infty_final}
\|e_i\|_\infty
&\;\le\;
C_1\big(|e_i(t_0)|+|\dot e_i(t_0)|\big)
\nonumber\\[3pt]
&\quad+\frac{C_2(\zeta)}{\tau K_i}
\left(\frac{|r_i(t_0)|}{\tau_a}+c_1 J_{\max}+c_2 a_{\max}\right).
\end{align}
\end{proposition}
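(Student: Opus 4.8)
The plan is to recognize that Proposition~\ref{prop:soft_hierarchy} repackages estimates already assembled in the proof of Lemma~\ref{lem:jerk_bound}, so the work is careful bookkeeping rather than new analysis. First I would recall that under the tuning $K_d=1/\tau$ with the barrier inactive the spacing error obeys the forced second-order ODE \eqref{eq:forced_ode_spacing}, namely $\ddot e_i+(\tau K_p)\dot e_i+(\tau K_i)e_i=-\tau\dot r_i$. Writing this in companion form with a matrix whose characteristic polynomial is $\lambda^2+\tau K_p\lambda+\tau K_i$ — Hurwitz because $K_p,K_i>0$ — gives $\|e^{At}\|\le C_1 e^{-\alpha t}$ with $C_1,\alpha>0$ depending only on $(K_p,K_i,\tau)$. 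Variation of constants then expresses $e_i(t)$ as the decaying homogeneous term plus the convolution $\int_{t_0}^t g(t-s)\bigl(-\tau\dot r_i(s)\bigr)\,ds$, where $g$ is the scalar Green's function of that operator; bounding $\|g\|_{L^1}\le C_2(\zeta)/(\tau K_i)$ with $C_2=1$ in the over/critically damped case and $C_2=1/(\zeta\sqrt{1-\zeta^2})$ for $0<\zeta<1$ (using $g(t)=\frac{1}{\omega_d}e^{-\zeta\omega_n t}\sin(\omega_d t)$ and $\int_0^\infty e^{-\alpha t}|\sin\beta t|\,dt\le1/\alpha$), then taking suprema, yields \eqref{eq:soft_hierarchy_bound}.

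Second, I would derive the companion bound \eqref{eq:r_dot_bound} from the actuator filter $\tau_a\dot r_i+r_i=-\tau_a\dot u_i$. Differentiating the PID law $u_i=K_p e_i+K_i z_i+\tfrac1\tau\Delta v_i$ and substituting $a_i=u_i+r_i$, $\dot z_i=e_i$ writes $\dot u_i$ as a fixed linear combination of $\dot e_i$, $e_i$, $r_i$, and $a_{i-1}$. Feeding this into the scalar stable filter and using the already-bounded $e_i,\dot e_i$ together with the hypotheses $|a_{i-1}|\le a_{\max}$ and $|\dot a_{i-1}|\le J_{\max}$, a direct integration of the filter (equivalently Grönwall) gives $\sup_{t\ge t_0}|\dot r_i(t)|\le |r_i(t_0)|/\tau_a+c_1 J_{\max}+c_2 a_{\max}$ with $c_1,c_2$ explicit in $(K_p,K_i,\tau,\tau_a)$. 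Finally, substituting \eqref{eq:r_dot_bound} into \eqref{eq:soft_hierarchy_bound}, bounding the exponential factor by its value at $t=t_0$, and collecting terms produces the closed-form estimate \eqref{eq:e_infty_final}.

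The one delicate point is the apparent circularity — the bound on $\dot r_i$ invokes bounds on $e_i,\dot e_i$, while the bound on $e_i$ invokes $\sup|\dot r_i|$. The clean resolution is to treat the triple $(e_i,\dot e_i,r_i)$ as the state of a single stable LTI system driven by the exogenous signals $(a_{i-1},\dot a_{i-1})$ — with the CBF inactive and $K_d=1/\tau$ this is precisely the structure made explicit in the proof of Lemma~\ref{lem:jerk_bound} — and to apply the variation-of-constants/Hurwitz estimate to the whole state at once, so that \eqref{eq:soft_hierarchy_bound} and \eqref{eq:r_dot_bound} are the two coordinate projections of one aggregate bound and the substitution in the last step is an unconditional algebraic manipulation rather than a fixed-point argument. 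I expect this bookkeeping — tracking which constants depend on $\zeta$ alone, on $(K_p,K_i,\tau)$, or on $(K_p,K_i,\tau,\tau_a)$ — to be the main (and essentially routine) obstacle, since all analytic content is inherited from the jerk lemma.
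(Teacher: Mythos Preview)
Your proposal is correct and follows essentially the same route as the paper: derive the forced second-order ODE for $e_i$, invoke the Hurwitz companion matrix for the homogeneous decay, bound the forced part by $\|g\|_{L^1}\cdot\sup|\tau\dot r_i|$ with the same case split $C_2(\zeta)=1$ versus $C_2(\zeta)=1/(\zeta\sqrt{1-\zeta^2})$, and obtain \eqref{eq:r_dot_bound} from the actuator filter via Gr\"onwall before substituting. Your explicit discussion of the apparent circularity between the $e_i$ and $\dot r_i$ estimates, and its resolution by viewing $(e_i,\dot e_i,r_i)$ as a single Hurwitz LTI state driven by $(a_{i-1},\dot a_{i-1})$, is in fact more careful than the paper's own proof, which simply asserts the Gr\"onwall step without flagging the coupling.
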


\begin{proof}
We first derive the forced second-order ODE governing the spacing error dynamics. From $\dot e_i=\Delta v_i-\tau a_i$, $a_i=u_i+r_i$ (with $r_i:=a_i-u_i$ the actuator tracking error), $u_i=K_p e_i+K_i z_i+\frac{1}{\tau}\Delta v_i$, and $\dot z_i=e_i$, one obtains
\begin{equation}
\label{eq:forced_ode_final}
\ddot e_i + (\tau K_p)\,\dot e_i + (\tau K_i)\,e_i \;=\; -\,\tau\,\dot r_i.
\end{equation}

Writing $a_1:=\tau K_p>0$ and $a_0:=\tau K_i>0$, the associated companion matrix is $A=\bigl[\begin{smallmatrix}0&1\\-a_0&-a_1\end{smallmatrix}\bigr]$ (cf.~\cite{HornJohnson}), which is Hurwitz. Hence, there exist $C_1,\alpha>0$ with $\|e^{At}\|\le C_1 e^{-\alpha t}$. Let $g$ be the causal fundamental solution (Green's function) of $\ddot e+a_1\dot e+a_0 e=\delta$, i.e., the unique causal distribution solving the equation with $g(t)=0$ for $t<0$. By the classical convolution solution formula (cf.~\cite{BoyceDiPrima}),
\begin{align}
e_i(t)
&=\underbrace{\begin{bmatrix}1&0\end{bmatrix}e^{A(t-t_0)}\!\begin{bmatrix}e_i(t_0)\\ \dot e_i(t_0)\end{bmatrix}}_{\text{homogeneous term}}
\nonumber\\[4pt]
&\quad+\int_{t_0}^t g(t-s)\,d_i(s)\,ds,
\qquad d_i:=-\tau\dot r_i.
\end{align}
The homogeneous term is bounded by 
\begin{equation}
\label{eq:hom_decay_bound}
|e_i^{\rm hom}(t)| \;\le\; C_1\,e^{-\alpha (t-t_0)}\big(|e_i(t_0)|+|\dot e_i(t_0)|\big).
\end{equation}
For the forced term, we apply the $L^1$--$L^\infty$ convolution bound, yielding
\[
\left|\int_{t_0}^t g(t-s)\,d_i(s)\,ds\right|
\;\le\;
\|g\|_{L^1(0,\infty)}\,
\sup_{s\in[t_0,t]}|d_i(s)|.
\]
Since $G(s)=\mathcal{L}\{g\}(s)=1/(s^2+a_1 s+a_0)$, we have $\int_0^\infty g=G(0)=1/a_0$ in all damping regimes.\footnote{When $g\ge 0$ (critical or overcritical damping) this implies $\|g\|_{L^1}=1/a_0$. Otherwise, $g$ oscillates and $\|g\|_{L^1}\le 1/(\zeta\sqrt{1-\zeta^2}\,a_0)$.} Defining
\[
C_2(\zeta)=
\begin{cases}
1, & \zeta\ge 1,\\[3pt]
\dfrac{1}{\zeta\sqrt{1-\zeta^2}}, & 0<\zeta<1,
\end{cases}
\]

\[
\text{with} \quad
a_1 = 2\zeta\omega_n, \quad
a_0 = \omega_n^2.
\]
we obtain \eqref{eq:soft_hierarchy_bound}.

Finally, differentiating the actuator relation and applying Grönwall to $\tau_a\dot r_i+r_i=-\tau_a\dot u_i$ with inputs bounded by $a_{\max}$ and $J_{\max}$ give \eqref{eq:r_dot_bound}. Substituting \eqref{eq:r_dot_bound} into \eqref{eq:soft_hierarchy_bound} yields \eqref{eq:e_infty_final}.
\end{proof}

Consequently, spacing regulation is prioritized asymptotically: when the actuator tracking error vanishes ($r_i\to0$), the spacing error necessarily converges to zero ($e_i(t)\to0$). However, this prioritization remains soft in that bounded but nonzero actuator derivatives $\dot r_i$ may induce short-lived transients in $e_i$ whose magnitude scales with $\|\dot r_i\|_\infty$. This soft-hierarchy structure permits the controller to temporarily tolerate bounded departures from perfect string stability, mediated by the leader's jerk and acceleration limits ($J_{\max}$ and $a_{\max}$), in order to converge rapidly to the economically optimal speed. Despite these transient excursions, the spacing error ultimately decays to zero and remains uniformly bounded throughout. This behavior precisely realizes the intended soft hierarchy: asymptotic prioritization of spacing regulation combined with sufficient short-term flexibility to accommodate economically efficient adjustments to the desired speed.

It is convenient to parameterize the error-loop coefficients as
\[
a_1=\tau K_p=2\zeta\omega_n,\qquad a_0=\tau K_i=\omega_n^2,
\]
so that
\[
K_p=\frac{2\zeta\omega_n}{\tau},\qquad K_i=\frac{\omega_n^2}{\tau},\qquad K_d=\frac{1}{\tau}.
\]
With this parameterization, $\zeta$ controls transient smoothness and $\omega_n$ sets responsiveness. The constants $C_1,\alpha>0$ in \eqref{eq:hom_decay_bound} can be taken from any standard bound on the matrix exponential of a Hurwitz matrix, while $C_2(\zeta)$ in \eqref{eq:soft_hierarchy_bound} is the $L^1$ norm of the fundamental solution: $C_2(\zeta)=1$ for $\zeta\ge 1$ and $C_2(\zeta)\le 1/(\zeta\sqrt{1-\zeta^2})$ for $0<\zeta<1$. 

\section{Case Studies}
\label{sec:case}
We now validate the hierarchical controller through numerical simulations that confirm the analytical properties established in Section~\ref{sec:analysis} and quantify its performance relative to canonical baseline designs.
\subsection{Experimental Design and Parameter Estimation}

The numerical case studies demonstrate four control objectives: safety under abrupt leader braking via the high-order CBF filter, string stability in the $L^\infty$ sense, comfort and jerk bounds during set-point changes and energy or fuel benefits of platooning versus baseline controllers. 

The simulations implement the hierarchical design from Section~\ref{sec:control} with PID followers, the leader speed servo~\eqref{eq:lead-filter} - \eqref{eq:lead-control}, and CBF safety projection~\eqref{eq:cbf-lag} - \eqref{eq:clip}. The continuous-time vehicle dynamics~\eqref{eq:dynamics} are integrated numerically using explicit Euler with time step $\Delta t \in [0.01, 0.05]$\,s, and control inputs are updated at each integration step.

To isolate the individual contributions of spacing regulation and velocity damping, we conduct an ablation study comparing the proposed hierarchical controller against two canonical limiting cases drawn from the car-following literature. Baseline A implements a spacing-only controller with no V2V communication, governed by the control law $u_i = k_s[s_i - s_i^*(v_i)]$ where $s_i^*(v_i) = s_0 + \tau v_i$. This controller uses only local radar or lidar measurements and represents the dominant paradigm for modeling human driving behavior in car-following theory, corresponding to the Optimal Velocity or Intelligent Driver Model class formalized by \cite{Helbing1998} and \cite{Treiber2013}.

Baseline B implements a speed-matching controller that ignores spacing entirely, with control law $u_i = k_v(v_{i-1} - v_i)$ using only the predecessor's communicated speed via V2V. This design represents the minimal constant-spacing adaptive cruise control or pure velocity-difference control studied by Ploeg et al.\ (2014), and it appears as the second term $k_2(v_{i-1} - v_i)$ in the OVRV equilibrium model without the spacing term $k_1(s_i - s_i^*)$. Both baselines retain the same actuator dynamics~\eqref{eq:actuation_lag} and safety projection~\eqref{eq:cbf-lag} - \eqref{eq:clip} as the proposed controller to ensure a controlled comparison that attributes performance differences solely to the feedback structure.

We describe two experiments, C1 and C2, in the subsequent sections. Model parameters are drawn from publicly available empirical data or well-documented defaults. Where direct measurements are unavailable, values are inferred from related empirical parameters. A summary of all calibration values is provided in Table~\ref{tab:parameters}. From formulas \eqref{eq:gains}, the controller gains follow as
\begin{equation}
K_P = 0.4, \quad K_I = 0.04, \quad K_D = 1.0.
\label{eq:gains_numerical}
\end{equation} 
The damping form
$(s^2 + 2\lambda s + \lambda^2)$ of \eqref{eq:cbf} yields $k_1 = 2\lambda$ and $k_2 = \lambda^2$ for the CBF gains $k_1$ and $k_2$, where
$\lambda = 1/T_{\mathrm{safe}}$. Choosing a safety time headway of $T_{\mathrm{safe}} = 2\,\mathrm{s}$ yields 
a relaxation rate of $\lambda = 2\,\mathrm{s^{-1}}$, and consequently
\begin{equation}
  k_1 = 4~\mathrm{s^{-1}}, \qquad 
  k_2 = 4~\mathrm{s^{-2}}.
\end{equation}
Each simulation employs a uniform time window of $T = 300$\,s to ensure consistent transient analysis across scenarios. The lead vehicle is initialized at position $x_0(0) = 0$. The platoon begins in equilibrium state with zero acceleration $a_i(0) = 0$ and equilibrium spacing $s_i(0) = s^\star_i(v_i(0))$ for all following vehicles $i \geq 1$. We assume a straight, level road segment without slope ($\theta \equiv 0$) throughout all simulated scenarios. This assumption is representative of many practical highway scenarios where driving conditions are approximately homogeneous. Under such conditions, lateral curvature does not influence the analysis of inter-vehicle spacing dynamics and string stability within the single-lane platoon configuration. For each scenario, we report the following performance metrics:
\begin{itemize}
    \item \textbf{Minimum headway margin:} $\min_{i,t} h_i(t)$, which quantifies the safety buffer relative to the barrier constraint.
    \item \textbf{Spacing-error supremum:} $\sup_{i,t} |e_i(t)|$, measuring the worst-case deviation from the target time-gap policy across all vehicles and time instances.
    \item \textbf{Fuel economy:} Cumulative fuel consumption divided by cumulative distance traveled, expressed in liters per 100~km.
\end{itemize}

\begin{table*}[t]
\centering
\caption{Summary of model parameters and data sources.}
\label{tab:parameters}
\begin{tabular}{lcc l}
\hline
\textbf{Quantity} & \textbf{Nominal value} & \textbf{Units} & \textbf{Sources / Notes} \\
\hline
\multicolumn{4}{l}{\textbf{Vehicle dynamics and actuation}} \\
\hline
Actuator lag $\tau_a$ & 0.4 & s & \cite{Yang2017} Fermi estimate, supported by modeling guidance \\
Leader speed–servo time constant $T_\ell$ & 1.6 & s & \cite{Rajamani2011} chosen $\approx4\tau_a$ to ensure overdamped leader dynamics \\
CBF gain $k_1$ & 2.0 & s$^{-1}$ & \cite{Xiao2019,Abel2024} exponential decay rate for 2nd-order barrier dynamics \\
CBF gain $k_2$ & 4.0 & s$^{-2}$ & \cite{Xiao2019,Abel2024} ensuring critically damped safety recovery \\
Safety-recovery time constant $T_{\mathrm{safe}}$ & 2.0 & s &
\cite{Rajamani2011,Shladover2019} typical automated braking response time \\ 
Natural frequency $\omega_n$ & 0.20 & s$^{-1}$ & \cite{Ploeg14,Rajamani2011} typical bandwidth for string-stable truck following \\
Damping ratio $\zeta$ & 1.0 & -- & \cite{Ploeg14,Rajamani2011} near-critical damping ensures smooth follower response \\
Speed bounds $[v_{\min}, v_{\max}]$ & [0, 30] & m/s & Typical operating range for heavy trucks \\
Maximum acceleration $a_{\max}$ & +1.5 & m/s$^2$ & \cite{Poplin2013} heavy-truck acceleration data \\
Maximum deceleration $a_{\min}$ & --5.0 & m/s$^2$ & Performance limits ($\sim$0.6\,g) for loaded trucks \cite{CCMTA2013} \\
Gross mass $m$ & 40{,}000 & kg & Baseline tractor–trailer in EU study \cite{Delgado2017} \\
Vehicle length $L$ & 16.5 & m & Typical European tractor–semi combination \cite{ACEA2003} \\[4pt]
\hline
\multicolumn{4}{l}{\textbf{Spacing policy, aerodynamics, and rolling resistance}} \\
\hline
Time headway $\tau$ & 1.0 & s & \cite{Scania2017,Shladover2019,Caltrans2019} representative range for truck platoons \\
Standstill gap $s_0$ & 5 & m & Typical following distance at rest \cite{FMCSA_following} \\
CBF lower bound $\tau_{\min}$ & 0.6 & s & Safety constraint for automated truck following \cite{Shladover2019} \\
Maximum deceleration $b_{\max}$ & 5.0 & m/s$^2$ & Regulatory and empirical limit \cite{FMVSS2011,CCMTA2013} \\
Leader aerodynamic coefficient $\alpha_{\text{lead}}$ & 0.12 & -- & \cite{Turri2015,McAuliffe2018} CFD-based platoon drag data \\
Follower aerodynamic coefficient $\alpha_{\text{foll}}$ & 0.30 & -- & \cite{Turri2015,McAuliffe2018} experimental data \\
Spacing scale $s_{0,i}$ & 12 & m & \cite{Turri2015} nominal inter-vehicle offset \\
Baseline drag coefficient $C_{d0}$ & 0.53 & -- & \cite{Stenvall2010} representative for long-haul trucks \\
Frontal area $A$ & 9.7 & m$^2$ & \cite{Ortega2013} typical for tractor–semi combination \\
Rolling resistance $C_r$ & 0.005 & -- & \cite{JRC_VECTO_RRC,ICCT2021} long-haul truck tires \\[4pt]
\hline
\multicolumn{4}{l}{\textbf{Powertrain, fuel, and communication parameters}} \\
\hline
Auxiliary load $P_{\mathrm{aux}}$ & 1.8 & kW & \cite{JRCVECTO2019} accessory and HVAC consumption \\
Engine efficiency $\eta_{\mathrm{eng}}$ & 0.40 & -- & \cite{JRCVECTO2019,ScaniaEuroVI2019} average for Euro~VI diesel \\
Drivetrain efficiency $\eta_{\mathrm{driv}}$ & 0.90 & -- & \cite{JRCVECTO2019} mechanical loss estimate \\
Diesel lower heating value (LHV) & 42.7 & MJ/kg & \cite{EN590} standard specification \\
Fuel density $\rho_f$ & 0.84 & kg/L & \cite{EN590} diesel property \\
Flat-road fuel use (check) & 31 & L/100\,km & \cite{ICCT2021} observed for loaded tractor–trailers \\
V2V broadcast rate & $\geq10$ & Hz & \cite{ETSI302637} ETSI CAM standard (20\,Hz typical) \\
\hline
\end{tabular}
\end{table*}

\subsection{Presentation Of Results}
The two numerical experiments C1 and C2 progressively validate the controller's layered functionality: Each case compares the hierarchical design against the two baseline controllers defined in Subsection~V-A.
\Cpara{Smooth Leader Speed Change}
This scenario examines the platoon's transient response to a commanded change in the leader's cruise speed. All vehicles initialize at $v_0 = 18\,\text{m/s}$ with equilibrium spacing $s_i(0) = s_0 + \tau v_0 = 23\,\text{m}$. At $t = 10\,\text{s}$, the leader's target speed is updated from $v^\star = 18\,\text{m/s}$ to $v^\star = 25\,\text{m/s}$ via the first-order filter~\eqref{eq:lead-filter}, inducing a gradual acceleration phase. Two platoon sizes are considered: $N = 2$ and $N = 8$, representing the typical range from field trials (2-4 trucks) to larger experimental formations used to study scalability and aerodynamic benefits \cite{Schroeder2020,ACEA2017}. The performance metrics of all three controllers are summarized in Table \ref{tab:two_truck} for the 2-vehicle platoon and in Table \ref{tab:eight_truck} for the 8-vehicle platoon. The velocity trajectories and spacing errors of all three controllers are summarized in Figures \ref{fig:vel2_platoon} and \ref{fig:se2_platoon} for the 2-vehicle platoon and in Figures \ref{fig:vel8_platoon} and \ref{fig:se8_platoon} for the 8-vehicle platoon. 

Baseline controller A exhibits significant stability deficiencies at a platoon size of two vehicles already, which manifest in undesirable oscillations in the velocity profile of the following vehicle. These instabilities lead to a reduction of the safety barrier as well as increased fuel consumption. For a platoon size of eight vehicles, the baseline controller without consideration of V2V communication proves infeasible: Starting from the seventh vehicle, the safety barrier~\eqref{eq:barrier} can no longer be maintained within the physical velocity constraint $v \in [0, 30]$~m/s, resulting in collisions. This is evidenced by the negative values of the safety barrier in Table~\ref{tab:eight_truck}.

In contrast, both baseline controller B and the PID controller achieve stable and translationally symmetric driving behavior for both platoon sizes. The key advantage of the PID controller lies in the additional consideration of the spacing rate in the control variable, which enables more dynamic transient vehicle-following behavior and allows closer following while maintaining stability. Consequently, the PID controller achieves superior results with respect to both the spacing error and the aerodynamically induced fuel consumption.

\begingroup
\setlength{\tabcolsep}{3pt}   
\renewcommand{\arraystretch}{0.9} 

\begin{table}[t]
\centering
\caption{Two-Truck Platoon Metrics, Leader Acceleration Experiment}
\label{tab:two_truck}
\footnotesize
\begin{tabular}{lccc}
\hline
\textbf{Controller} & $h_{\min}$ [m] & $\|e_s\|_\infty$ [m] & $F$ [L/100\,km] \\
\hline
Baseline A & 7.20 & 5.48 & 0.35 \\
Baseline B & 7.20 & 0.61 & 0.33 \\
PID Control & 7.20 & 0.37 & 0.32 \\
\hline
\end{tabular}
\end{table}

\begin{table}[t]
\centering
\caption{Eight-Truck Platoon Metrics, Leader Acceleration Experiment}
\label{tab:eight_truck}
\footnotesize
\begin{tabular}{lccc}
\hline
\textbf{Controller} & $h_{\min}$ [m] & $\|e_s\|_\infty$ [m] & $F$ [L/100\,km] \\
\hline
Baseline A & -56.67 & 193.58 & 0.85 \\
Baseline B & 7.20 & 0.64 & 0.33 \\
PID Control & 7.20 & 0.37 & 0.32 \\
\hline
\end{tabular}
\end{table}

\endgroup


\begin{figure}[t]
\centering
\setlength{\abovecaptionskip}{2pt}
\setlength{\belowcaptionskip}{-6pt}

\subfloat[Baseline~A~Controller]{
    \includegraphics[width=\columnwidth]{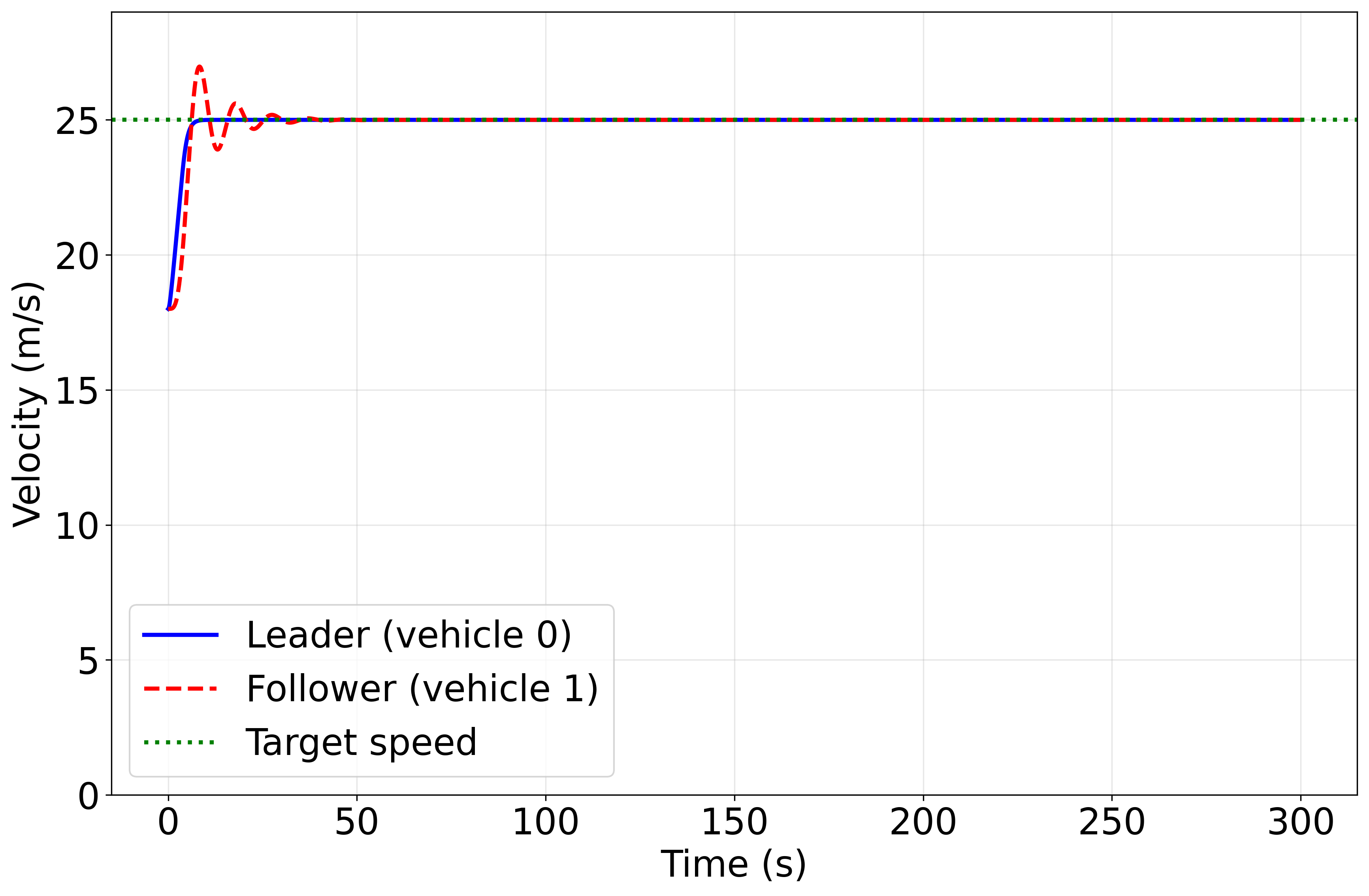}
    \label{fig:vel2_nv2v}
}
\vspace{2pt}

\subfloat[Baseline~B~Controller]{
    \includegraphics[width=\columnwidth]{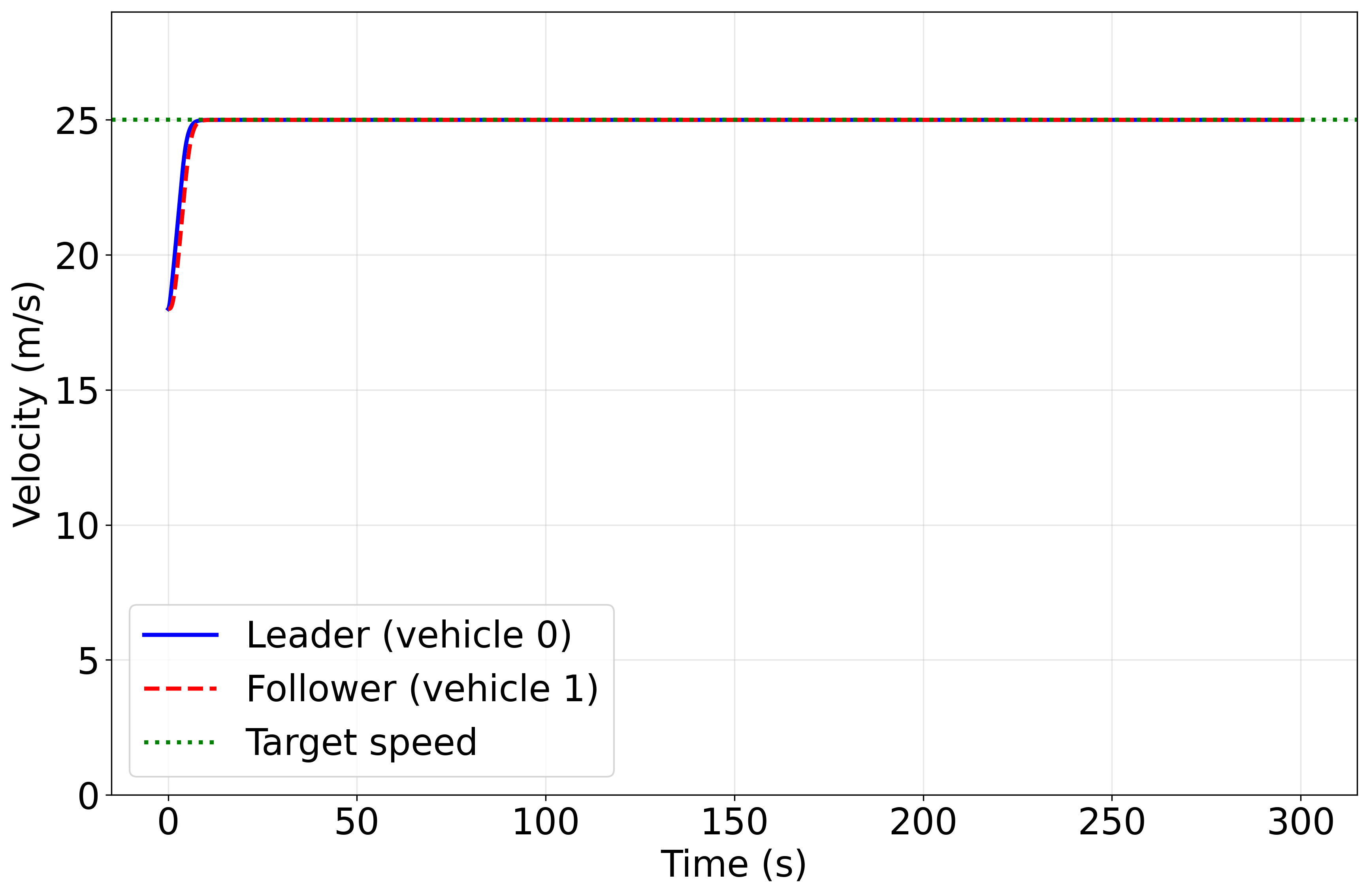}
    \label{fig:vel2_ov2v}
}
\vspace{2pt}

\subfloat[PID~Controller]{
    \includegraphics[width=\columnwidth]{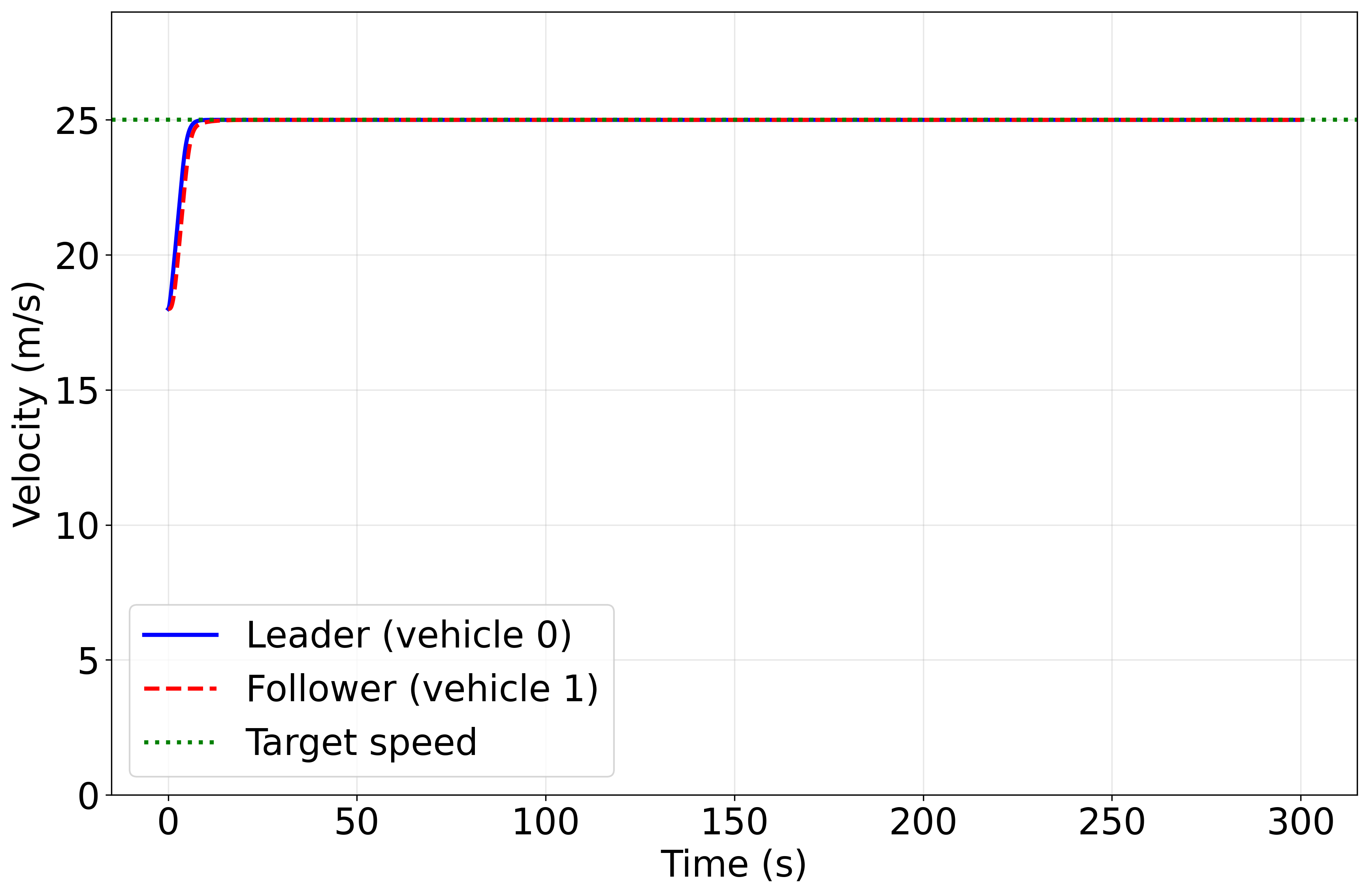}
    \label{fig:vel2_pid}
}

\caption{Velocity trajectories of the two-truck platoon under different control strategies.}
\label{fig:vel2_platoon}
\end{figure}

\begin{figure}[t]
\centering
\setlength{\abovecaptionskip}{2pt}
\setlength{\belowcaptionskip}{-6pt}

\subfloat[Baseline~A~Controller]{
    \includegraphics[width=\columnwidth]{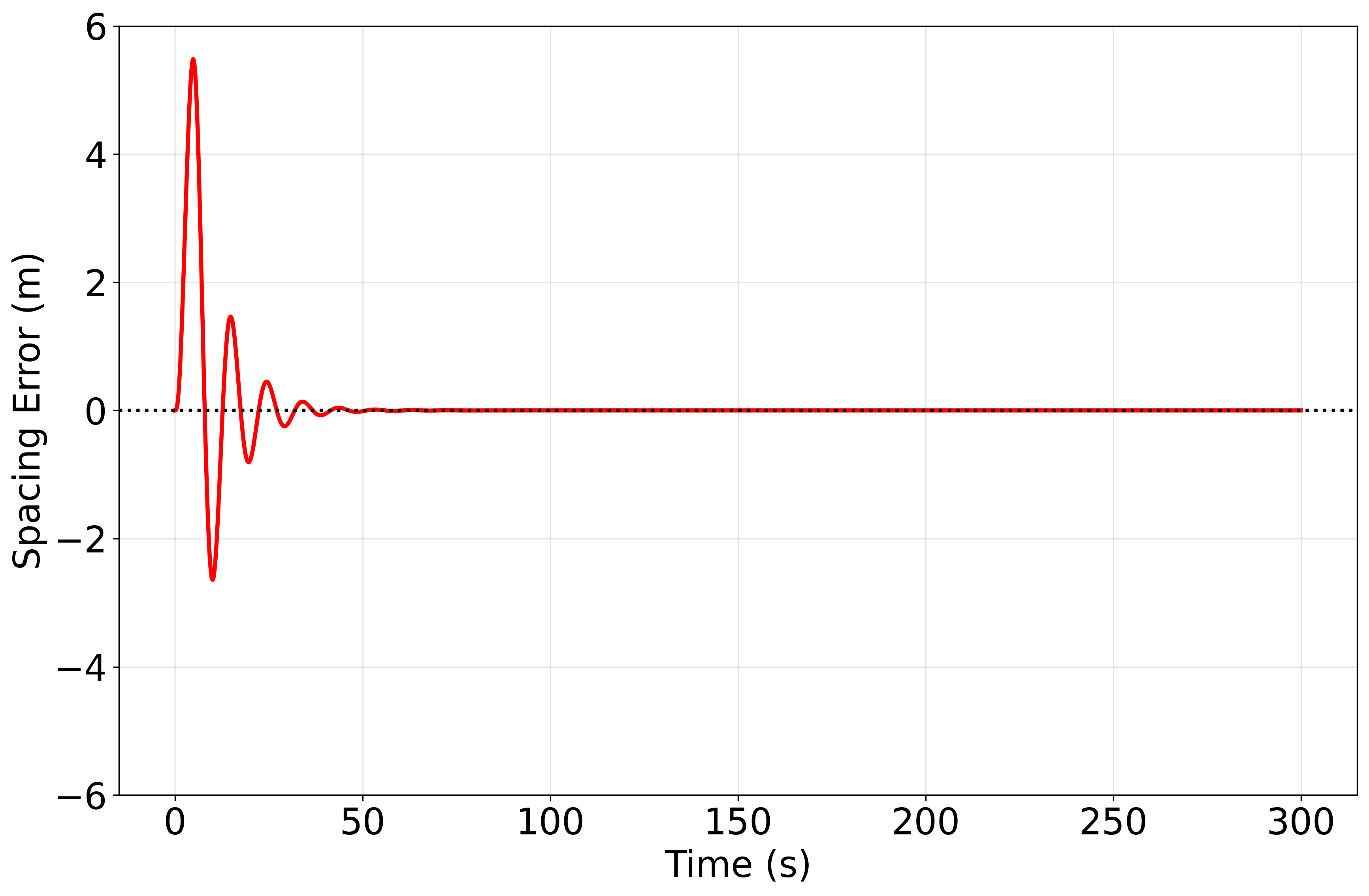}
    \label{fig:se2_nv2v}
}
\vspace{2pt}

\subfloat[Baseline~B~Controller]{
    \includegraphics[width=\columnwidth]{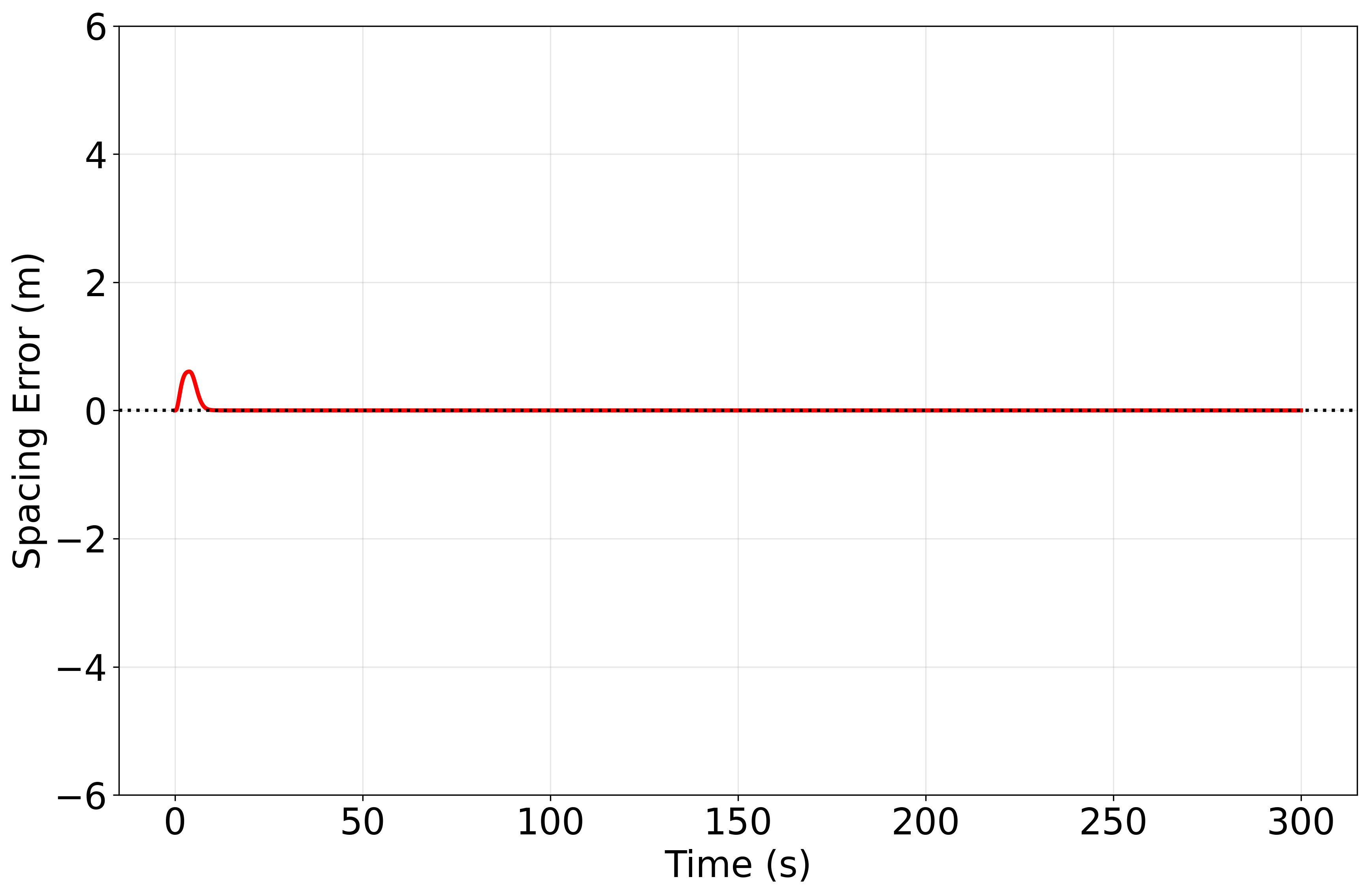}
    \label{fig:se2_ov2v}
}
\vspace{2pt}

\subfloat[PID~Controller]{
    \includegraphics[width=\columnwidth]{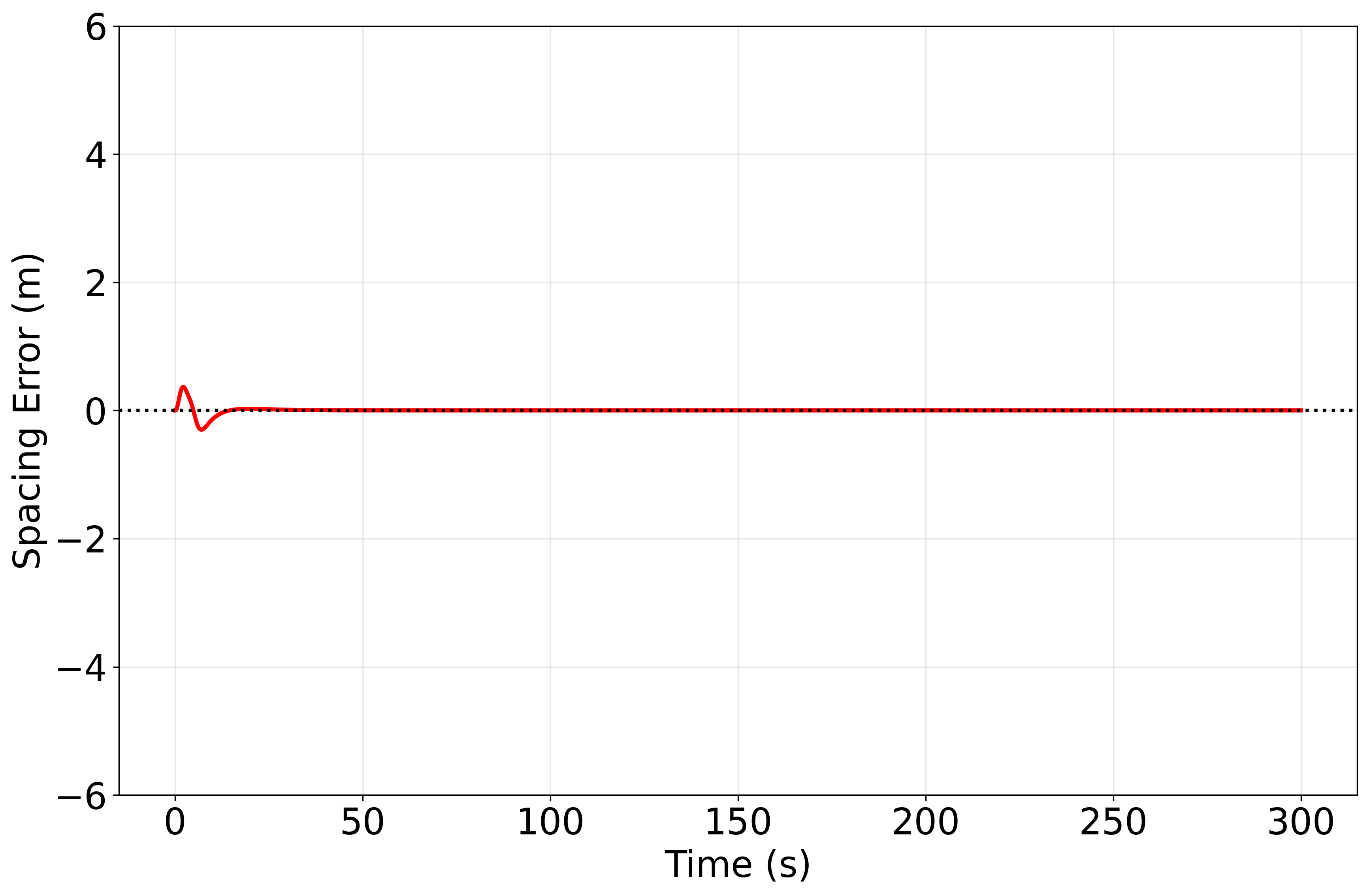}
    \label{fig:se2_pid}
}

\caption{Spacing-error trajectories of the two-truck platoon under different control strategies.}
\label{fig:se2_platoon}
\end{figure}

\begin{figure}[t]
\centering
\setlength{\abovecaptionskip}{2pt}
\setlength{\belowcaptionskip}{-6pt}

\subfloat[Baseline~A~Controller]{
    \includegraphics[width=\columnwidth]{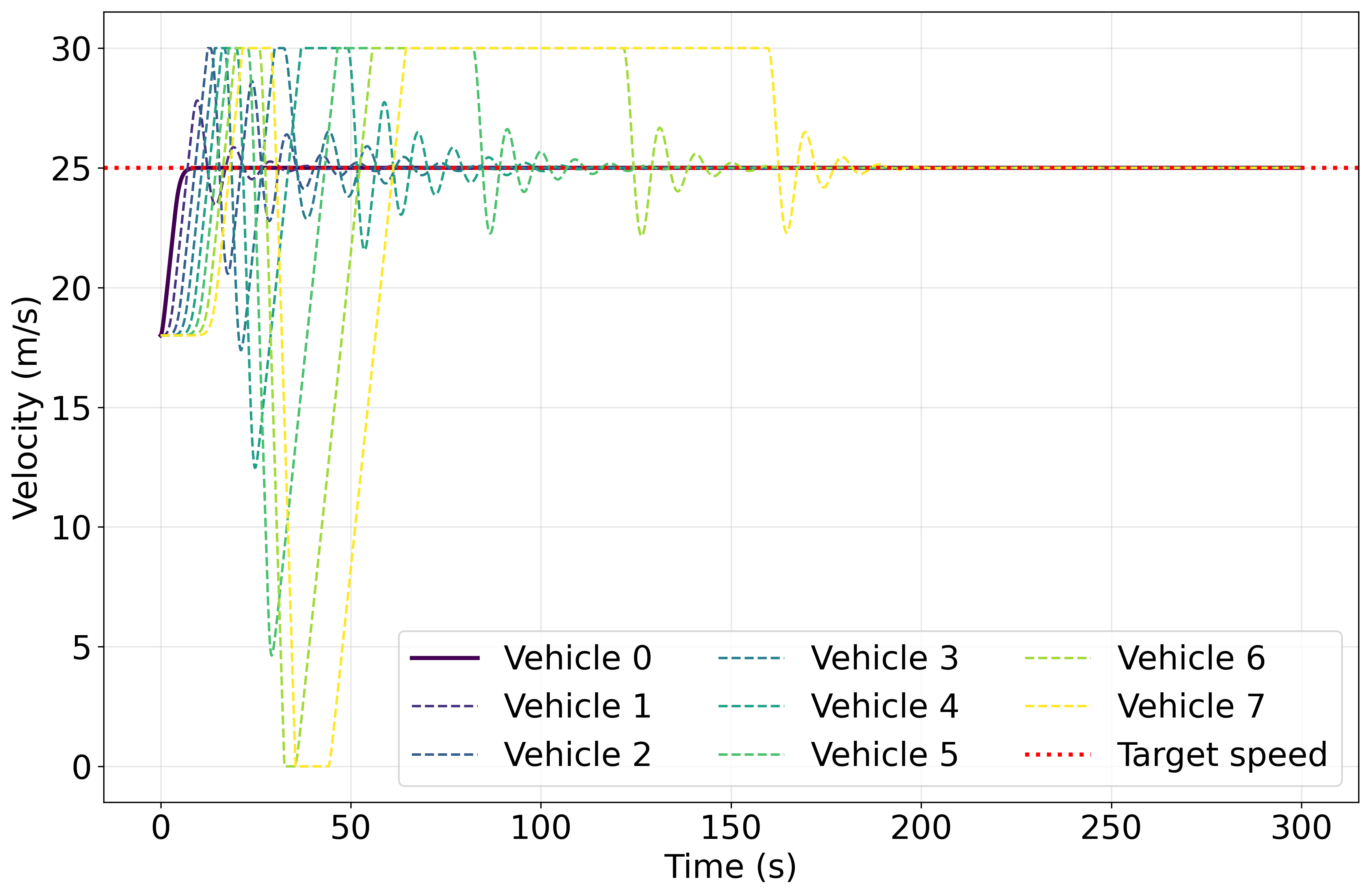}
    \label{fig:vel8_nv2v}
}
\vspace{2pt}

\subfloat[Baseline~B~Controller]{
    \includegraphics[width=\columnwidth]{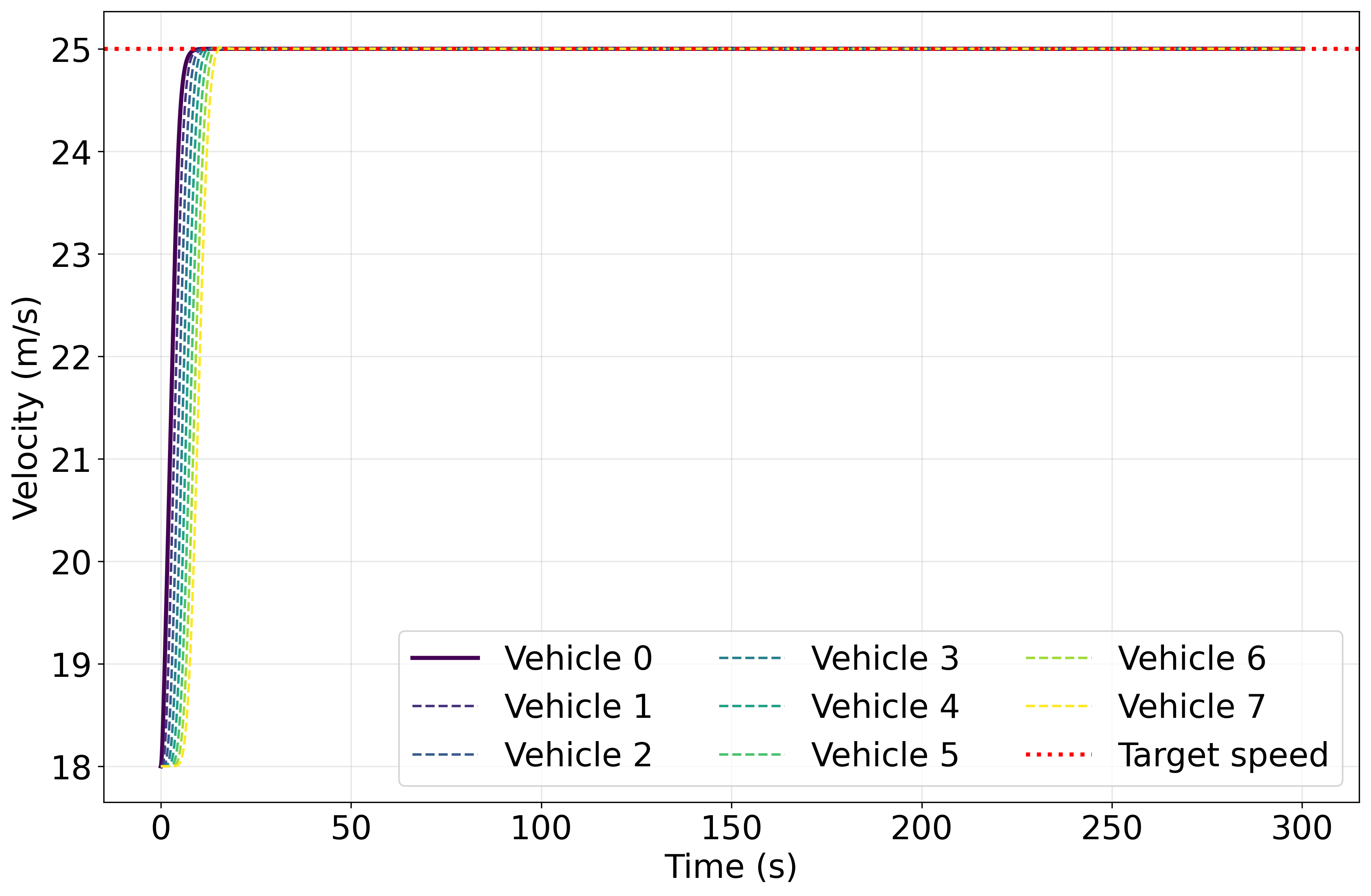}
    \label{fig:vel8_ov2v}
}
\vspace{2pt}

\subfloat[PID~Controller]{
    \includegraphics[width=\columnwidth]{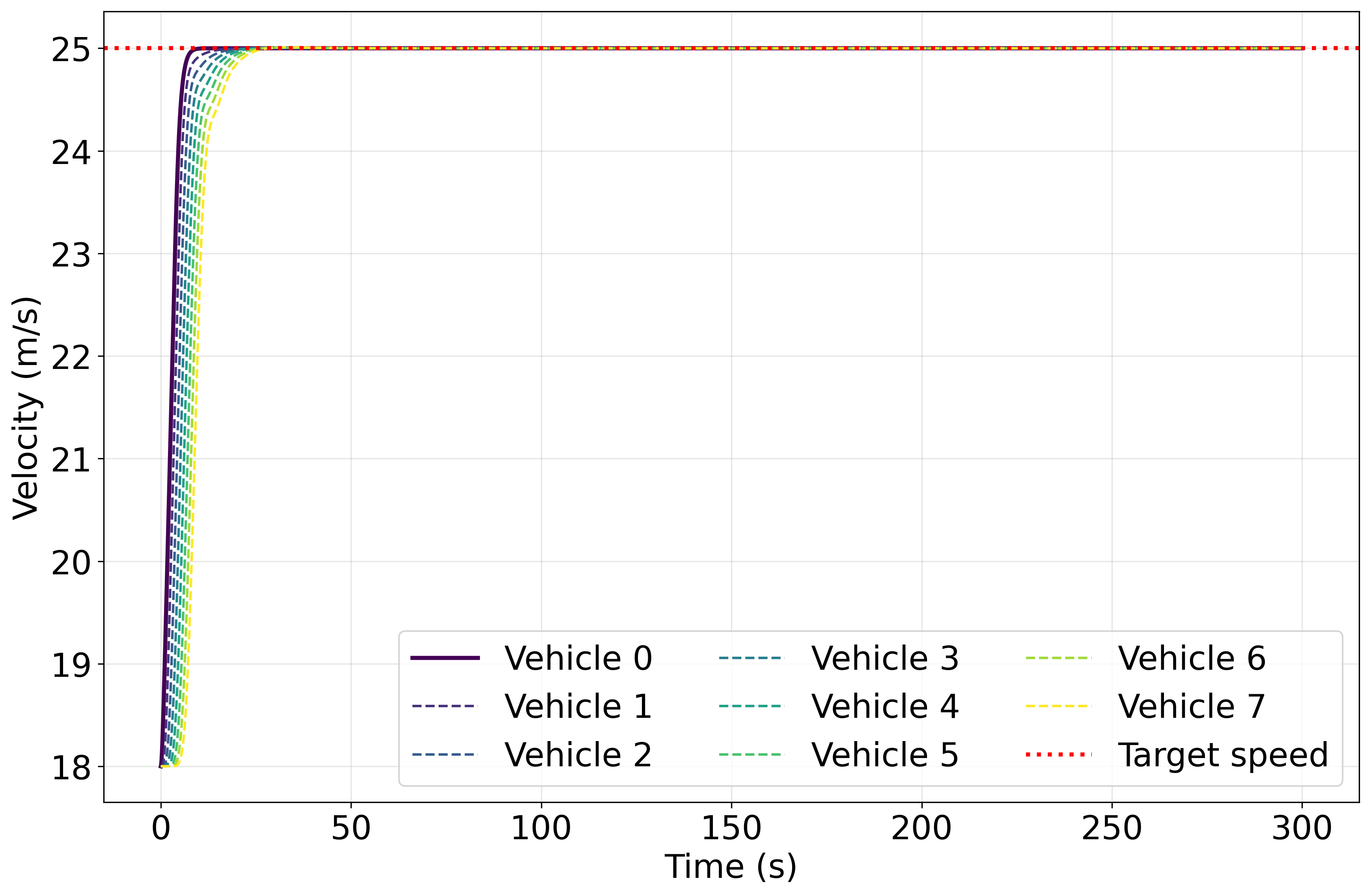}
    \label{fig:vel8_pid}
}

\caption{Velocity trajectories of the eight-truck platoon under different control strategies. Y-limits vary per panel to highlight transient shapes.}
\label{fig:vel8_platoon}
\end{figure}

\begin{figure}[t]
\centering
\setlength{\abovecaptionskip}{2pt}
\setlength{\belowcaptionskip}{-6pt}

\subfloat[Baseline~A~Controller]{
    \includegraphics[width=\columnwidth]{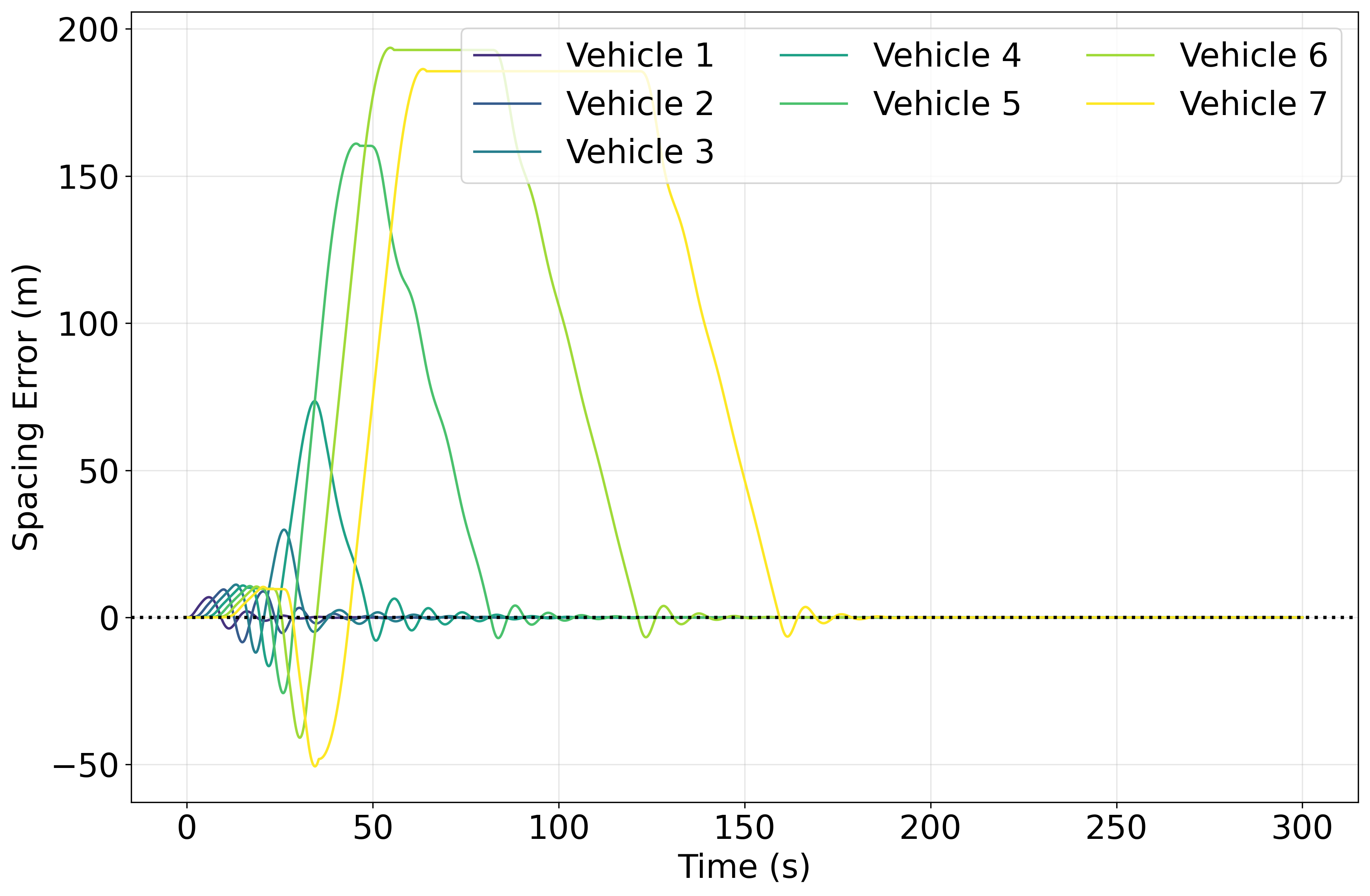}
    \label{fig:se8_nv2v}
}
\vspace{2pt}

\subfloat[Baseline~B~Controller]{
    \includegraphics[width=\columnwidth]{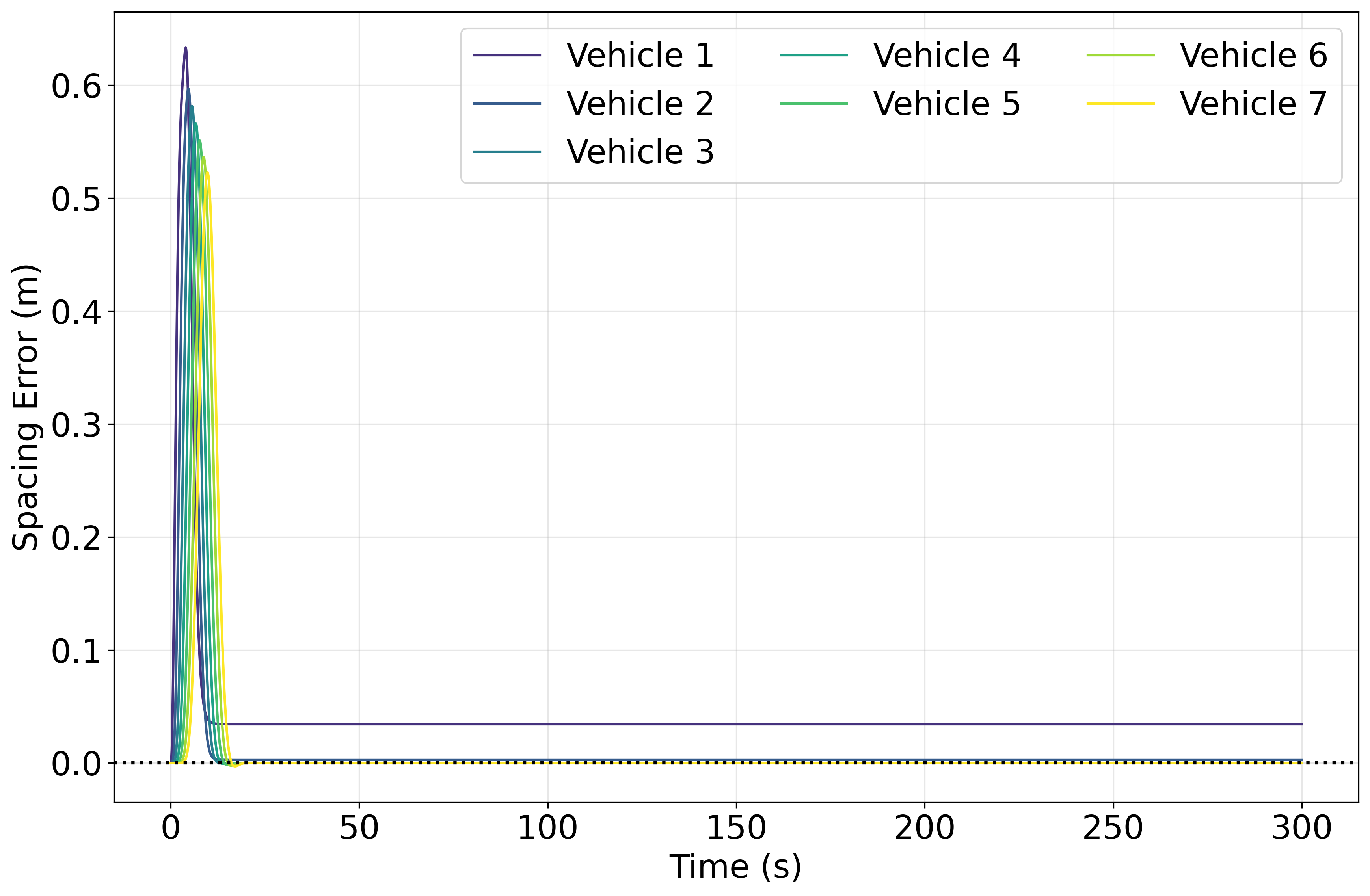}
    \label{fig:se8_ov2v}
}
\vspace{2pt}

\subfloat[PID~Controller]{
    \includegraphics[width=\columnwidth]{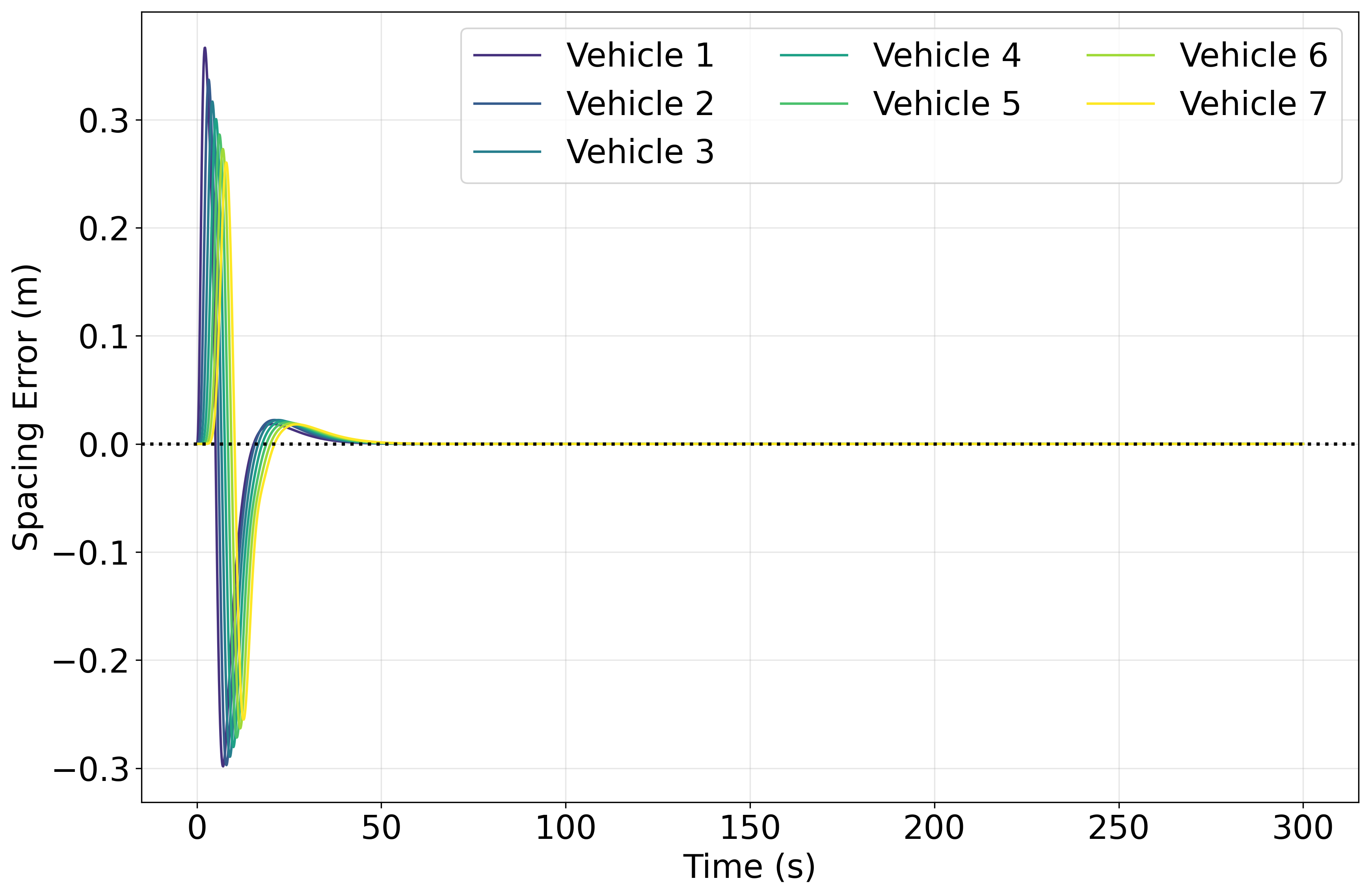}
    \label{fig:se8_pid}
}

\caption{Spacing-error trajectories of the eight-truck platoon under different control strategies. Y-limits vary per panel to highlight transient shapes.}
\label{fig:se8_platoon}
\end{figure}

\Cpara{Emergency braking with CBF safety projection}

This scenario demonstrates the safety-critical functionality of the control barrier function filter under abrupt leader deceleration. The platoon comprises $N = 4$ trucks initialized at a uniform cruise speed of $v_i(0) = 25$ m/s with equilibrium spacing $s_i(0) = s_0 + \tau v_i(0) = 30$ m for all following vehicles. The lead vehicle begins at position $x_0(0) = 0$. At $t = 0$ s, the leader's commanded speed is instantaneously set to $v^{\star} = 0$ m/s, inducing maximum deceleration within actuator constraints to simulate an emergency braking maneuver.

After 10 seconds of braking, the leader's commanded speed is reset to its current realized speed $v_0(10)$, effectively commanding zero acceleration and allowing the platoon to stabilize at the post-braking configuration. This relaxation phase ensures that followers can recover equilibrium spacing without further disturbances. At $t = 20$ s, the leader's target speed is restored to $v^{\star} = 25$ m/s, marking a return to normal cruising operations and enabling assessment of the controller's ability to resume nominal performance after a critical safety event.

The performance metrics for all three controllers under this emergency scenario are summarized in Table~\ref{tab:brake_performance}. Velocity trajectories illustrating the transient response of each vehicle during the braking, stabilization, and recovery phases are presented in Figure~\ref{fig:brake_speed}, while the corresponding spacing-error trajectories are shown in Figure~\ref{fig:brake_error}. All controllers maintained a minimum headway margin $h_{\min} >0$ across all scenarios, which confirms that no collisions occurred throughout the maneuver. However, the three control architectures exhibit significant performance differences. Baseline controller A amplifies spacing-error disturbances along the vehicle string during the restart maneuver, as illustrated in Figure~\ref{fig:brake_speed_nv2v}. The velocity trajectory of the last vehicle exhibits a pronounced plateau at the maximum speed of 30 m/s, reflecting aggressive acceleration required to close the spacing gap that emerged during restart due to actuation delay. This behavior results in suboptimal fuel efficiency, stemming from high acceleration variability and increased aerodynamic drag at enlarged inter-vehicle spacings.
Controllers B and PID successfully attenuate disturbances along the string and achieve more stable convergence behavior. Baseline controller B, however, fails to fully restore the equilibrium state after restart, as its control strategy does not directly account for spacing errors. Following vehicles terminate their acceleration phase upon matching the speed of their respective predecessor, without closing to the prescribed equilibrium spacing. Again, the resulting increased inter-vehicle gaps induce aerodynamic inefficiencies while additionally reducing the mean platoon speed over the entire maneuver.

\begin{table}[t]
\centering
\caption{Four-Truck Platoon Metrics, Emergency Brake Experiment}
\label{tab:brake_performance}
\footnotesize
\begin{tabular}{lccc}
\hline
\textbf{Controller} & $h_{\min}$ [m] & $\|e_s\|_\infty$ [m] & $F$ [L/100\,km] \\
\hline
Baseline A & 0.01 & 28.33 & 0.52 \\
Baseline B & 3.31 & 4.97 & 0.42 \\
PID Control & 0.01 & 2.15 & 0.39 \\
\hline
\end{tabular}
\end{table}

\begin{figure}[t]
\centering
\setlength{\abovecaptionskip}{2pt}
\setlength{\belowcaptionskip}{-6pt}

\subfloat[Baseline~A~Controller]{
    \includegraphics[width=\columnwidth]{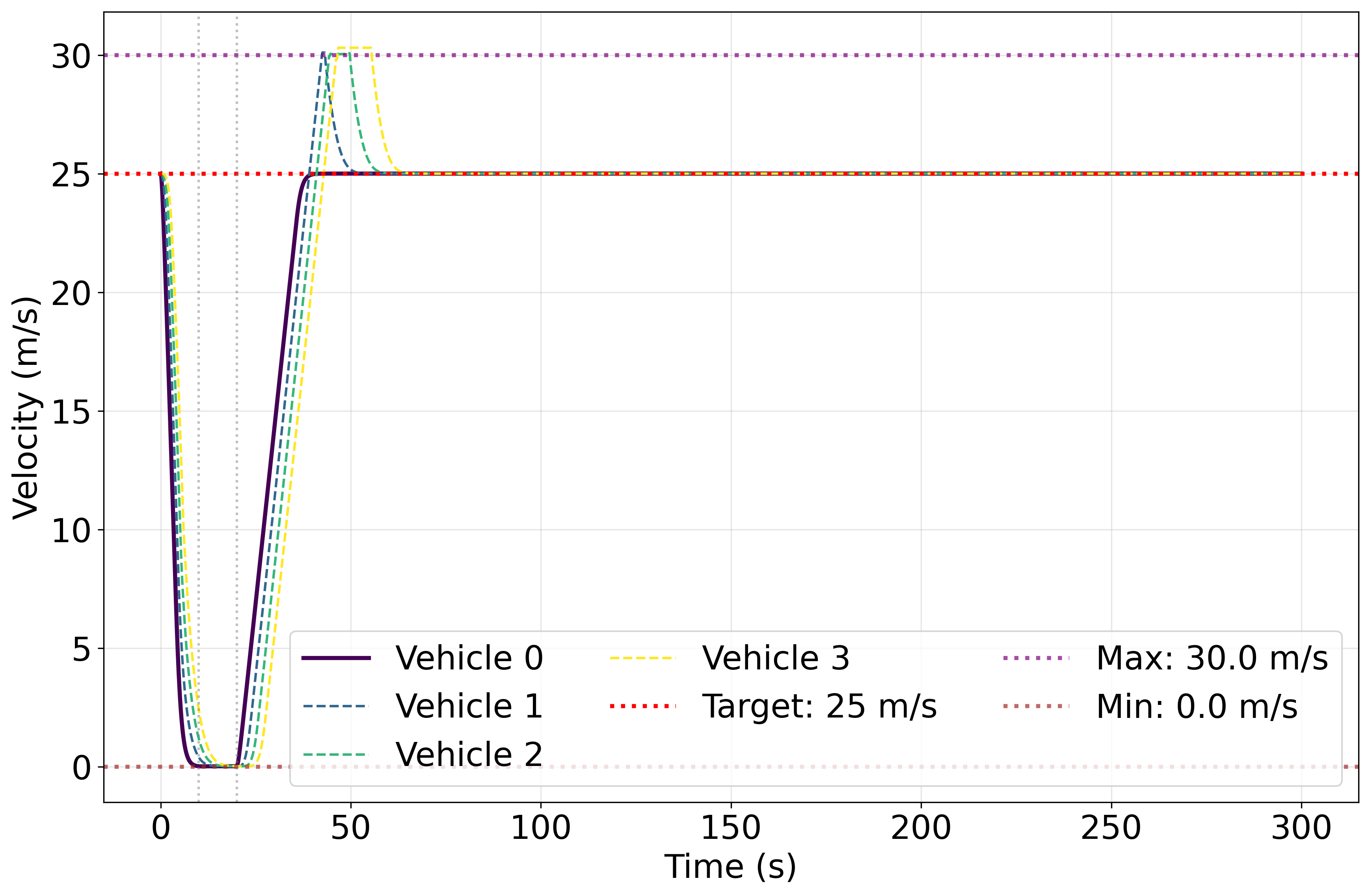}
    \label{fig:brake_speed_nv2v}
}
\vspace{2pt}

\subfloat[Baseline~B~Controller]{
    \includegraphics[width=\columnwidth]{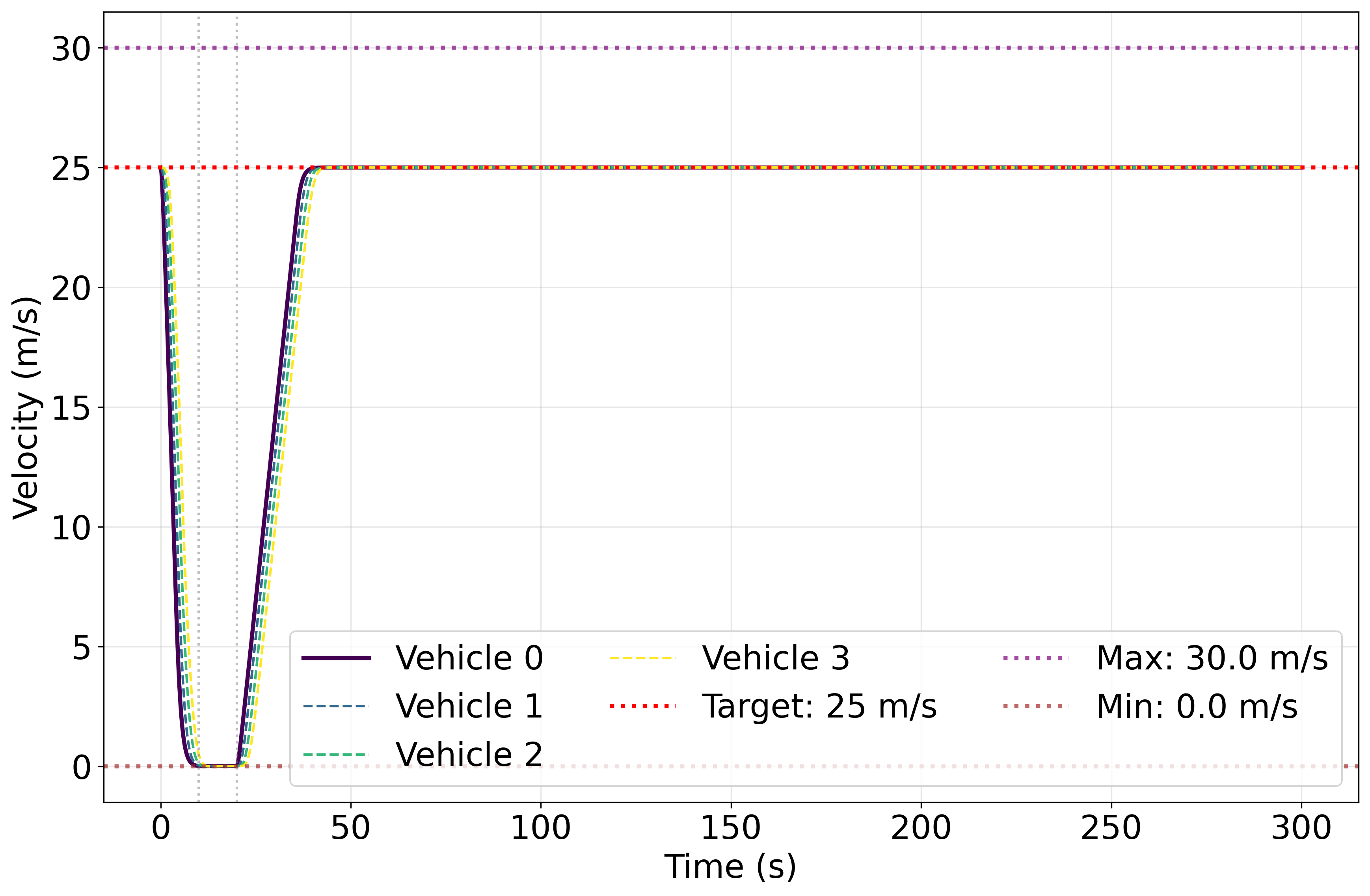}
    \label{fig:brake_oV2V}
}
\vspace{2pt}

\subfloat[PID~Controller]{
    \includegraphics[width=\columnwidth]{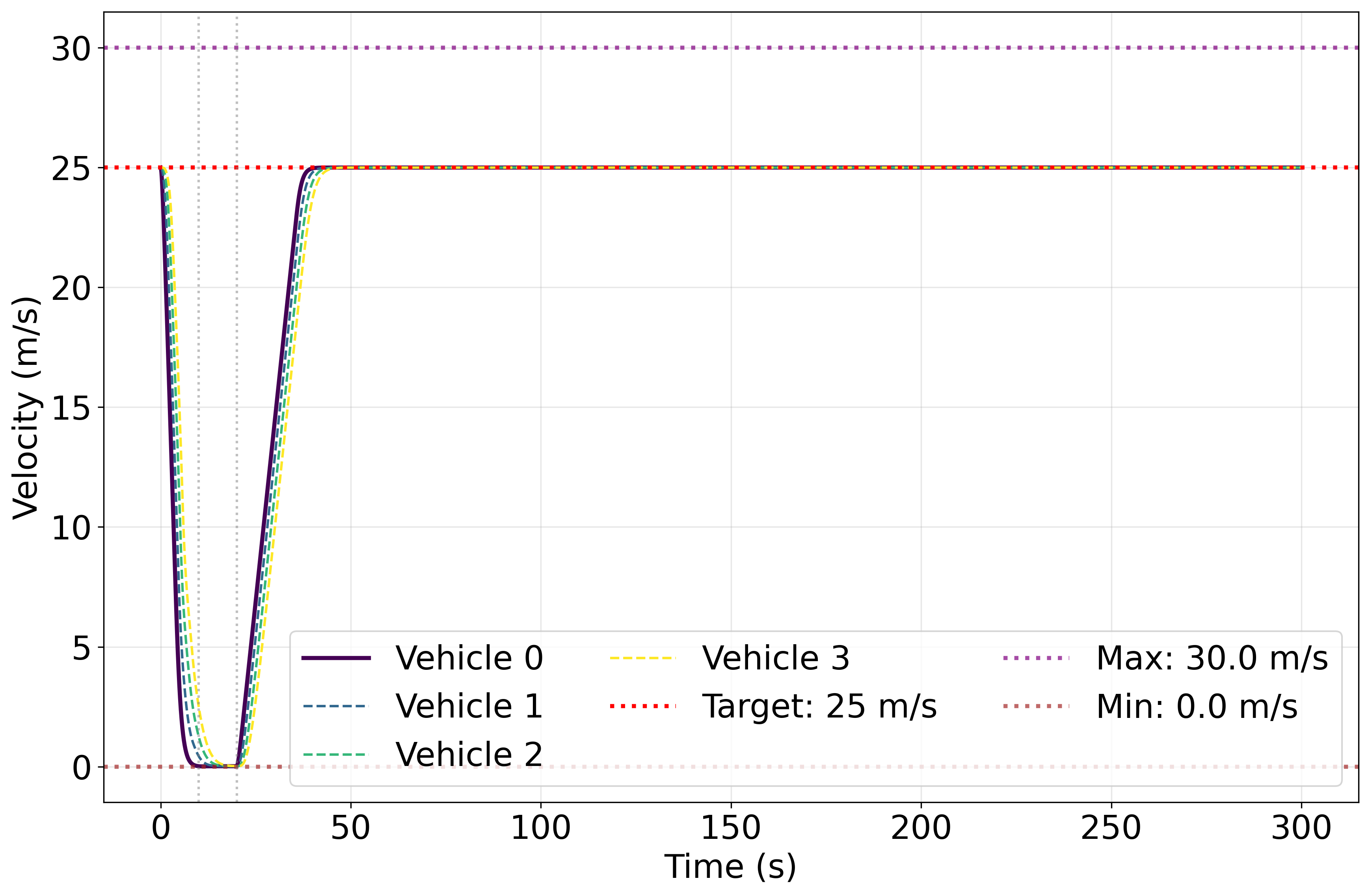}
    \label{fig:brake_PID}
}

\caption{Velocity trajectories of a four-vehicle platoon during an emergency braking maneuver under different control strategies.}
\label{fig:brake_speed}
\end{figure}

\begin{figure}[t]
\centering
\setlength{\abovecaptionskip}{2pt}
\setlength{\belowcaptionskip}{-6pt}

\subfloat[Baseline~A~Controller]{
    \includegraphics[width=\columnwidth]{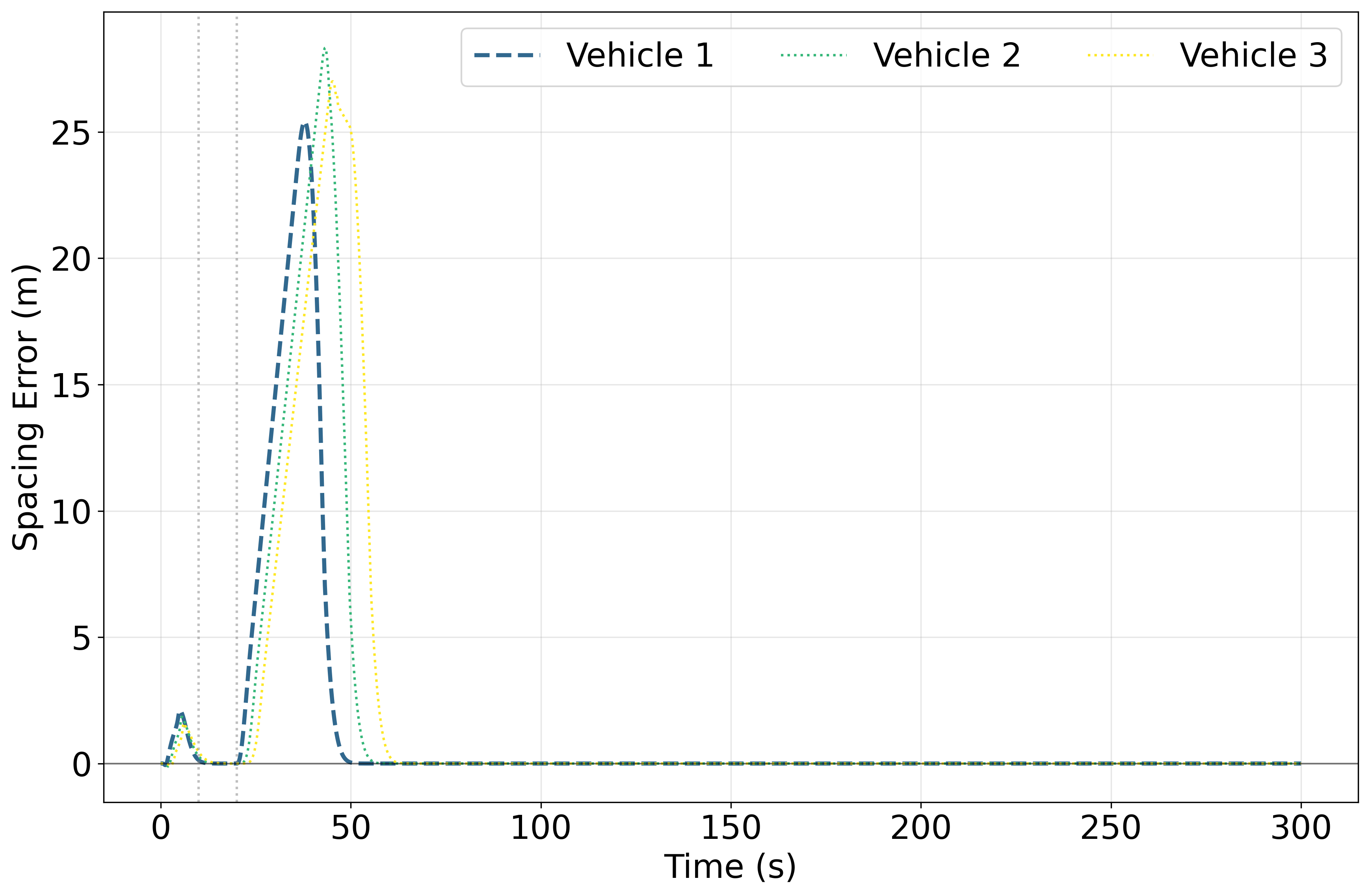}
    \label{fig:brake_nv2v}
}
\vspace{2pt}

\subfloat[Baseline~B~Controller]{
    \includegraphics[width=\columnwidth]{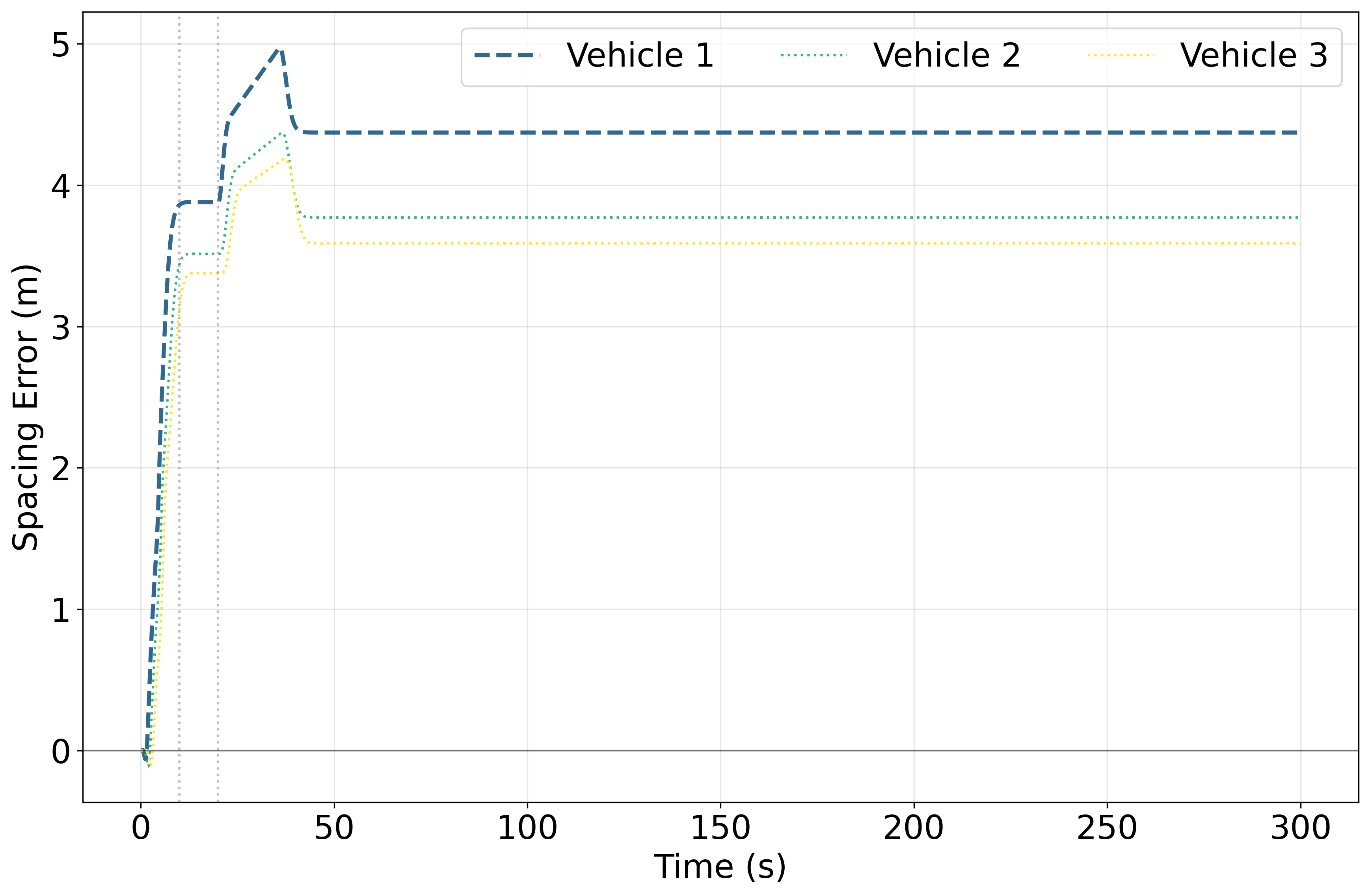}
    \label{fig:brake_oV2V}
}
\vspace{2pt}

\subfloat[PID~Controller]{
    \includegraphics[width=\columnwidth]{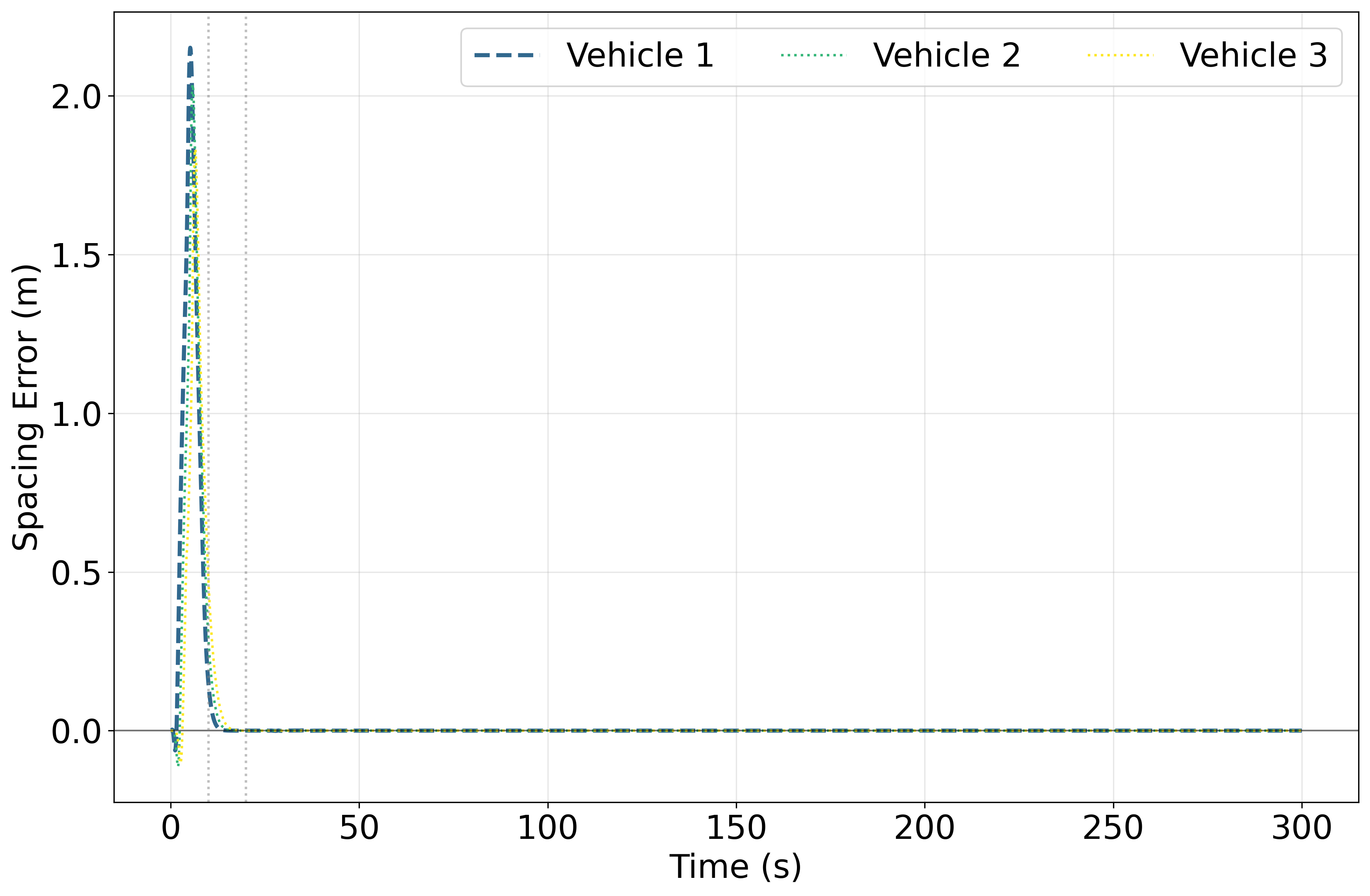}
    \label{fig:brake_PID}
}

\caption{Spacing error trajectories from target spacing for a four-vehicle platoon during an emergency braking maneuver under different control strategies.}
\label{fig:brake_error}
\end{figure}

\section{Conclusion}
\label{sec:conclusion}
This paper presented a hierarchical longitudinal control architecture for autonomous truck platoons that systematically addresses the competing demands of safety assurance, string-stable formation control, and economic efficiency. The proposed framework decomposes these objectives across three coordinated layers operating at distinct timescales: a high-rate control barrier function filter that enforces hard collision-avoidance constraints, a mid-timescale PID spacing regulator with explicit actuator-lag compensation, and a slow-timescale economic optimizer that balances fuel consumption against schedule adherence.

The framework provides several principal contributions. We established a transparent gain-tuning procedure that maps desired closed-loop characteristics, in particular damping ratio and natural frequency, directly to PID coefficients while explicitly accounting for first-order actuation delays inherent to heavy-duty vehicles. This lag-aware design preserves the interpretability, ease of implementation, and inherent robustness against model mismatch associated with classical PID control while ensuring robust performance under realistic actuator dynamics. We also provided comprehensive analytical guarantees for the closed-loop system, including formal convergence to the OVRV model under steady-state conditions and explicit worst-case bounds on transient spacing errors as functions of leader jerk and acceleration limits. These results quantify the soft hierarchy embedded in the controller: asymptotic prioritization of spacing regulation combined with bounded short-term flexibility to accommodate economically motivated speed adjustments. Finally, numerical case studies demonstrated that the hierarchical design achieves superior performance relative to canonical baseline controllers.
The modest communication requirements - broadcast of predecessor speed and acceleration at standard V2V rates - together with the distributed one-hop control architecture render the proposed scheme readily implementable within existing truck platooning infrastructures. Moreover, the analytical robustness guarantees extend to arbitrary platoon sizes, addressing scalability concerns that often limit model-based control strategies in large-scale cooperative formations.

Future research directions include both empirical validation and theoretical extensions of the framework. Integration of the hierarchical controller into existing on-road benchmarking studies with instrumented heavy-duty vehicles would provide valuable empirical validation under real traffic conditions. Such field trials would enable systematic assessment of performance under sensor noise, communication latency, heterogeneous payloads and mixed-traffic interactions with human-driven vehicles—factors not fully captured in the idealized simulation environment. This empirical validation would be essential to refine model parameters and establish operational guidelines for deployment.
Beyond empirical validation, two primary extensions to the control architecture merit consideration for future work. First, since the physical dynamics of powertrain responses and aerodynamic interactions is analytically well-understood, explicit physics-based state-space models could supplant the semi-model-free PID structure to achieve tighter performance. However, such a transition introduces analytical challenges: projection onto CBF-feasible sets and preservation of the temporally decoupled objective hierarchy requires careful treatment under nonlinear state-feedback given the uncertain actuation lag of heavy-duty trucks. Additionally, model mismatch in lag estimates can compromise both stability and safety margins under aggressive nonlinear cancellation.
Second, and of highest priority for our own future work, is the incorporation of state-of-the-art decentralized blockchain-based communication architectures into the platooning framework. Blockchain technology offers inherent advantages for secure, tamper-resistant data exchange in vehicular networks, enabling cryptographically verifiable broadcast of motion states and control commands without reliance on centralized infrastructure. Formal analysis of how blockchain latency and consensus protocols interact with the real-time control loop ensure that safety and string-stability guarantees remain intact under decentralized communication paradigms.

\bibliographystyle{IEEEtran}
\bibliography{refs}

@manual{VECTO2019,
  title        = {Vehicle Energy Consumption Calculation Tool (VECTO) — Methodology Documentation},
  author       = {{European Commission, Joint Research Centre}},
  year         = {2019},
  note         = {Version 4.0, EU Regulation No. 2017/2400},
  url          = {https://ec.europa.eu/clima/policies/transport/vehicles/vecto\_en}
}

@manual{MOVES2020,
  title        = {Motor Vehicle Emission Simulator (MOVES3) User Guide},
  author       = {{U.S. Environmental Protection Agency}},
  year         = {2020},
  note         = {EPA-420-B-20-044},
  url          = {https://www.epa.gov/moves/moves-latest-version}
}

@article{Bonnet2000,
  author       = {Bonnet, C. and Fritz, H.},
  title        = {Fuel consumption reduction in a platoon: Experimental results with two electronically coupled trucks at close spacing},
  journal      = {SAE Technical Paper},
  number       = {2000-01-3056},
  year         = {2000},
  doi          = {10.4271/2000-01-3056}
}

@techreport{NRELPlatoon2021,
  author       = {Lidicker, J. and Wood, E. and Duran, A. and Gonder, J.},
  title        = {Modeling Heavy-Duty Vehicle Platooning Fuel Savings and Aerodynamic Benefits},
  institution  = {National Renewable Energy Laboratory (NREL)},
  number       = {NREL/TP-5400-78458},
  year         = {2021},
  url          = {https://www.nrel.gov/transportation/fleettest-platooning.html}
}

@standard{SAEJ3016,
  author       = {{SAE International}},
  title        = {Taxonomy and Definitions for Terms Related to Driving Automation Systems for On-Road Motor Vehicles},
  number       = {SAE J3016},
  year         = {2021},
  organization = {Society of Automotive Engineers},
  address      = {Warrendale, PA, USA},
  note         = {Revised June 2021},
  doi          = {10.4271/J3016_202104},
  url          = {https://www.sae.org/standards/content/j3016\_202104/}
}

@standard{ETSI2019,
  author       = {{European Telecommunications Standards Institute (ETSI)}},
  title        = {Intelligent Transport Systems (ITS); Vehicular Communications; Basic Set of Applications; 
                  Part~2: Specification of Cooperative Awareness Basic Service},
  institution  = {ETSI},
  number       = {EN~302~637-2~V1.4.1},
  year         = {2019},
  note         = {Defines Cooperative Awareness Messages (CAM) exchanged between vehicles at 10--20~Hz, 
                  including position, speed, and acceleration.}
}

@article{Astrom1995,
  author    = {Karl Johan {\AA}str{\"o}m and Tore H{\"a}gglund},
  title     = {PID Controllers: Theory, Design, and Tuning},
  journal   = {IEEE Control Systems Magazine},
  volume    = {13},
  number    = {4},
  pages     = {55--63},
  year      = {1993},
  doi       = {10.1109/37.233400}
}

@article{Ploeg14,
  author    = {Jeroen Ploeg and Nathan van de Wouw and Henk Nijmeijer},
  title     = {L$_p$ String Stability of Cascaded Systems: Application to Vehicle Platooning},
  journal   = {IEEE Transactions on Control Systems Technology},
  volume    = {22},
  number    = {2},
  pages     = {786--793},
  year      = {2014},
  doi       = {10.1109/TCST.2013.2278342}
}

@book{Rajamani11,
  author    = {Rajesh Rajamani},
  title     = {Vehicle Dynamics and Control},
  edition   = {2nd},
  publisher = {Springer},
  address   = {New York},
  year      = {2011},
  doi       = {10.1007/978-1-4614-1433-9}
}

@book{BoyceDiPrima,
  author    = {William E. Boyce and Richard C. DiPrima},
  title     = {Elementary Differential Equations and Boundary Value Problems},
  publisher = {John Wiley \& Sons},
  edition   = {10th},
  year      = {2012},
  address   = {Hoboken, NJ},
  isbn      = {978-0-470-45882-1}
}

@book{HornJohnson,
  author    = {Roger A. Horn and Charles R. Johnson},
  title     = {Matrix Analysis},
  publisher = {Cambridge University Press},
  edition   = {2nd},
  year      = {2013},
  address   = {Cambridge, UK},
  isbn      = {978-0-521-54823-6}
}

@article{Helbing1998,
  author    = {Helbing, Dirk and Tilch, Benno},
  title     = {Generalized force model of traffic dynamics},
  journal   = {Physical Review E},
  volume    = {58},
  number    = {1},
  pages     = {133--138},
  year      = {1998},
  doi       = {10.1103/PhysRevE.58.133}
}

@book{Treiber2013,
  author    = {Treiber, Martin and Kesting, Arne},
  title     = {Traffic Flow Dynamics: Data, Models and Simulation},
  publisher = {Springer},
  address   = {Berlin, Heidelberg},
  year      = {2013},
  volume    = {1},
  isbn      = {978-3-642-32459-8},
  doi       = {10.1007/978-3-642-32460-4}
}

@article{Schroeder2020,
  author    = {T. Schroeder and L. Kr{\"o}ger and M. Lienkamp},
  title     = {A literature study on optimal gap and size of truck platooning: What is the optimal size and gap?},
  journal   = {Transportation Research Procedia},
  volume    = {47},
  pages     = {163--170},
  year      = {2020},
  doi       = {10.1016/j.trpro.2020.03.083}
}

@misc{ACEA2017,
  author       = {{European Automobile Manufacturers’ Association (ACEA)}},
  title        = {Platooning Roadmap: Driving the Future of Transportation},
  year         = {2017},
  howpublished = {\url{https://www.acea.auto/files/Platooning_roadmap.pdf}}
}

@techreport{Poplin2013,
  author       = {W. Poplin},
  title        = {Acceleration of Heavy Trucks},
  institution  = {W Poplin Engineering},
  year         = {2013}
}

@article{Yang2017,
  author    = {Fan Yang and Xiang Li and Jianping Wu and Zhengqi Li and Zhitao Xiao},
  title     = {A New Method for Analysing the Pressure Response Delay in a Pneumatic Brake System Caused by the Influence of Transmission Pipes},
  journal   = {Applied Sciences},
  volume    = {7},
  number    = {9},
  pages     = {941},
  year      = {2017},
  doi       = {10.3390/app7090941}
}

@techreport{Delgado2017,
  author       = {Oscar Delgado and Felipe Rodr{\'i}guez and Rachel Muncrief},
  title        = {Fuel Efficiency Technology in European Heavy-Duty Vehicles: Baseline and Potential for the 2020--2030 Time Frame},
  institution  = {The International Council on Clean Transportation (ICCT)},
  year         = {2017},
  url          = {https://www.mobilitafutura.eu/wp-content/uploads/2022/06/EU-HDV-Tech-Potential\_ICCT-white-paper\_14072017\_vF.pdf}
}

@misc{Scania2017,
  author       = {Scania},
  title        = {Scania takes part in multi-brand platooning project},
  year         = {2017},
  howpublished = {Press release},
  url          = {https://www.scania.com/group/en/home/newsroom/news/2017/scania-takes-part-in-multi-brand-platooning-project.html}
}

@techreport{Shladover2019,
  author       = {Steven E. Shladover and others},
  title        = {Partial Automation for Truck Platooning},
  institution  = {U.S. DOT / FHWA},
  year         = {2019},
  howpublished = {\url{https://rosap.ntl.bts.gov/view/dot/48739/dot_48739_DS1.pdf}}
}

@techreport{FMVSS2011,
  author      = {NHTSA},
  title       = {Federal Motor Vehicle Safety Standards; Air Brake Systems - Final Rule},
  institution = {Federal Register},
  year        = {2011},
  url         = {https://www.govinfo.gov/content/pkg/FR-2011-07-27/pdf/2011-18929.pdf}
}

@techreport{CCMTA2013,
  author      = {Canadian Council of Motor Transport Administrators},
  title       = {Performance Limits of Heavy Trucks},
  year        = {2013}
}

@article{Turri2015,
  author  = {Valerio Turri and others},
  title   = {Fuel-efficient and safe heavy-duty vehicle platooning},
  journal = {Doctoral Thesis, KTH},
  year    = {2015}
}

@techreport{Caltrans2019,
  author      = {California Dept. of Transportation},
  title       = {Truck Platoon Testing Allowed Under Assembly Bill 669 / 719},
  institution = {Caltrans},
  year        = {2019}
}

@techreport{JRC_VECTO_RRC,
  author      = {A. Tansini and others},
  title       = {Analysis of VECTO data for HDV CO\textsubscript{2} certification: Rolling Resistance},
  institution = {Joint Research Centre},
  year        = {2018}
}

@techreport{ICCT2021,
  author      = {Pierre-Louis Ragon and others},
  title       = {An analysis of the heavy-duty CO\textsubscript{2} standards baseline data},
  institution = {ICCT},
  year        = {2021}
}

@techreport{Ortega2013,
  author      = {Jason M. Ortega and others},
  title       = {Aerodynamic Drag Reduction of Class 8 Heavy Vehicles},
  institution = {U.S. DOE / Sandia},
  year        = {2013}
}

@mastersthesis{Stenvall2010,
  author = {Henrik Stenvall},
  title  = {Driving resistance analysis of long haulage trucks at Volvo},
  school = {Chalmers University of Technology},
  year   = {2010}
}

@misc{ACEA2003,
  author       = {ACEA - European Automobile Manufacturers’ Association},
  title        = {Truck Masses \& Dimensions – Impact on transport efficiency},
  year         = {2003}
}

@techreport{McAuliffe2018,
  author      = {Brandon McAuliffe and others},
  title       = {Influences on Energy Savings of Heavy Trucks Using Cooperative Adaptive Cruise Control},
  institution = {NREL},
  year        = {2018}
}

@misc{FMCSA_following,
  author       = {FMCSA},
  title        = {CMV Driving Tips: Following Too Closely},
  year         = {2014},
  note         = {Seconds-based following guidance for CMVs}
}

@techreport{JRCVECTO2019,
  author      = {A. Tansini and D. Fontaras and U. Manfredi},
  title       = {VECTO Technical Documentation v3.3: Vehicle Energy Consumption Calculation Tool},
  institution = {Joint Research Centre, European Commission},
  year        = {2019}
}

@misc{ScaniaEuroVI2019,
  author       = {Scania},
  title        = {Technical specifications for Euro~VI 13-litre engines},
  year         = {2019},
  howpublished = {Manufacturer data sheet}
}

@misc{EN590,
  author       = {European Committee for Standardization},
  title        = {EN 590:2022 Automotive Fuels -- Diesel -- Requirements and Test Methods},
  year         = {2022},
  howpublished = {CEN Standard}
}

@misc{ETSI302637,
  author       = {ETSI},
  title        = {ETSI EN 302 637-2 V1.4.1: Intelligent Transport Systems (ITS); Cooperative Awareness Message (CAM)},
  year         = {2019},
  howpublished = {European Telecommunications Standards Institute}
}

@book{Rajamani2011,
  author       = {Rajesh Rajamani},
  title        = {Vehicle Dynamics and Control},
  edition      = {2nd},
  publisher    = {Springer},
  address      = {New York},
  year         = {2011},
  isbn         = {978-1-4419-9886-4}
}

@article{Xiao2019,
  author    = {Xiao, Wenlong and Belta, Calin},
  title     = {Control Barrier Functions for Systems With High Relative Degree},
  journal   = {IEEE/ACM Transactions on Cyber-Physical Systems},
  volume    = {4},
  number    = {2},
  year      = {2019},
  pages     = {1--12}
}

@article{Abel2024,
  author    = {Abel, I. and Janković, M. and Krstić, M.},
  title     = {Prescribed-Time Safety Design for Strict-Feedback Nonlinear Systems},
  journal   = {Automatica},
  year      = {2024},
  note      = {forthcoming}
}

@article{Shladover1991,
  title   = {Automated vehicle control developments in the PATH program},
  author  = {Shladover, Steven E.},
  journal = {IEEE Transactions on Vehicular Technology},
  volume  = {40},
  number  = {1},
  pages   = {114--130},
  year    = {1991},
  doi     = {10.1109/25.69966}
}

@article{Li2019nonlinear,
  title   = {Nonlinear finite-time consensus-based connected vehicle platoon control under fixed and switching communication topologies},
  author  = {Li, Y. and Tang, C. and Peeta, S. and Wang, Y.},
  journal = {IEEE Transactions on Intelligent Transportation Systems},
  volume  = {20},
  number  = {6},
  pages   = {2209--2219},
  year    = {2019},
  doi     = {10.1109/TITS.2018.2857233}
}

@article{Li2020platoonconsensus,
  title   = {Platoon control of connected multi-vehicle systems under V2X communications: Design and experiments},
  author  = {Li, Y. and Chen, W. and Peeta, S. and Wang, Y.},
  journal = {IEEE Transactions on Intelligent Transportation Systems},
  volume  = {21},
  number  = {5},
  pages   = {1891--1902},
  year    = {2020},
  doi     = {10.1109/TITS.2019.2907089}
}

@article{GeOrosz2014,
  title   = {Dynamics of connected vehicle systems with delayed acceleration feedback},
  author  = {Ge, Jianqiang and Orosz, G{\'a}bor},
  journal = {Transportation Research Part C: Emerging Technologies},
  volume  = {46},
  pages   = {46--64},
  year    = {2014},
  doi     = {10.1016/j.trc.2014.05.004}
}

@article{Li2019ISM,
  title   = {Integral-sliding-mode braking control for connected vehicle platoon: Theory and application},
  author  = {Li, Y. and Tang, C. and Peeta, S. and Wang, Y.},
  journal = {IEEE Transactions on Industrial Electronics},
  volume  = {66},
  number  = {6},
  pages   = {4618--4628},
  year    = {2019},
  doi     = {10.1109/TIE.2018.2856206}
}

@inproceedings{Zheng2014,
  title     = {Influence of information flow topology on closed-loop stability of vehicle platoons},
  author    = {Zheng, Y. and Li, S. E. and Wang, J. and Li, K.},
  booktitle = {IEEE Intelligent Transportation Systems Conference (ITSC)},
  pages     = {2094--2100},
  year      = {2014},
  doi       = {10.1109/ITSC.2014.6957982}
}

@article{Guo2018adaptive,
  title   = {An adaptive switched control approach to heterogeneous platooning with intervehicle communication losses},
  author  = {Harfouch, Y. and Yuan, S. and Baldi, S.},
  journal = {IEEE Transactions on Control of Network Systems},
  volume  = {5},
  number  = {3},
  pages   = {1434--1444},
  year    = {2018},
  doi     = {10.1109/TCNS.2017.2773484}
}

@inproceedings{Fan2021PreScan,
  title     = {Longitudinal control of connected truck platoon based on PreScan},
  author    = {Fan, Junhong and Li, Yongfu and Zhu, Hao and Yu, Shuyou},
  booktitle = {Chinese Control and Decision Conference (CCDC)},
  pages     = {1110--1115},
  year      = {2021},
  doi       = {10.1109/CCDC52312.2021.9602602}
}

@article{Deng2016HDVframework,
  title   = {A general simulation framework for modeling and analysis of heavy-duty vehicle platooning},
  author  = {Deng, Q.},
  journal = {IEEE Transactions on Intelligent Transportation Systems},
  volume  = {17},
  number  = {11},
  pages   = {3252--3262},
  year    = {2016},
  doi     = {10.1109/TITS.2016.2531665}
}

@inproceedings{Li2017TransModeler,
  title     = {TransModeler-based implementation of autonomous vehicular platoon control},
  author    = {Li, Y. and Song, Y. and Zheng, T. and Feng, H.},
  booktitle = {Chinese Control and Decision Conference (CCDC)},
  pages     = {4087--4092},
  year      = {2017},
  doi       = {10.1109/CCDC.2017.7979410}
}

@article{Ozkan2022DSMPC,
  title   = {Distributed Stochastic Model Predictive Control for Human-Leading Heavy-Duty Truck Platoon},
  author  = {Ozkan, Mehmet Fatih and Ma, Yao},
  journal = {IEEE Transactions on Intelligent Transportation Systems},
  volume  = {23},
  number  = {9},
  pages   = {16059--16071},
  year    = {2022},
  doi     = {10.1109/TITS.2022.3147719}
}

@article{Li2024STdi4DMPC,
  title={STdi4DMPC: Distributed Model Predictive Control for Connected and Automated Truck Platoon Using Spatiotemporal Trajectory Prediction},
  author={Li, Liyou and Lyu, Hao and Wang, Ting and Cheng, Rongjun},
  journal={IEEE Transactions on Vehicular Technology},
  volume={73},
  number={10},
  pages={14563--14579},
  year={2024},
  doi={10.1109/TVT.2024.3412992}
}

@article{MartinezDiaz2024,
  author    = {Mart{\'i}nez-D{\'i}az, Margarita and others},
  title     = {Impacts of Platooning of Connected Automated Vehicles on Highways},
  journal   = {IEEE Transactions on Intelligent Transportation Systems},
  volume    = {25},
  number    = {7},
  pages     = {6366--6396},
  year      = {2024}
}

@article{Botelho2025,
  author    = {Botelho, Telmo Costa and others},
  title     = {Simulator and On-Road Testing of Truck Platooning: A Systematic Review},
  journal   = {European Transport Research Review},
  volume    = {17},
  number    = {1},
  pages     = {4},
  year      = {2025}
}

@article{Wang2024,
  author    = {Wang, Shian and Levin, Michael W. and Stern, Raphael},
  title     = {String Stable Control Design for Automated Vehicles under Cyberattacks},
  journal   = {Journal of Intelligent Transportation Systems},
  year      = {2024},
  pages     = {1--13}
}

@inproceedings{Fu2023,
  author    = {Fu, Zhe and others},
  title     = {Cooperative Driving for Speed Harmonization in Mixed-Traffic Environments},
  booktitle = {Proceedings of the 2023 IEEE Intelligent Vehicles Symposium (IV)},
  publisher = {IEEE},
  year      = {2023},
}

@article{Shang2024,
  author    = {Shang, Mingfeng and Wang, Shian and Stern, Raphael},
  title     = {A Two-Condition Continuous Asymmetric Car-Following Model for Adaptive Cruise Control Vehicles},
  journal   = {IEEE Transactions on Intelligent Vehicles},
  volume    = {9},
  number    = {2},
  pages     = {3975--3985},
  year      = {2024}
}

@inproceedings{Franke1995,
  author    = {Franke, U. and Bottiger, F. and Zomotor, Z. and Seeberger, D.},
  title     = {Truck Platooning in Mixed Traffic},
  booktitle = {Proceedings of the Intelligent Vehicles '95 Symposium},
  pages     = {1--6},
  year      = {1995},
  doi       = {10.1109/IVS.1995.528248}
}

@article{Neto2024,
  author    = {Neto, C. and Sim{\~o}es, A. and Cunha, L. and Duarte, S. P. and Lobo, A.},
  title     = {Qualitative Data Collection to Identify Truck Drivers' Attitudes Toward a Transition to Platooning Systems},
  journal   = {Accident Analysis \& Prevention},
  volume    = {195},
  pages     = {107405},
  year      = {2024},
  doi       = {10.1016/j.aap.2023.107405}
}

@article{Mikami2019,
  author    = {Mikami, M. and Yoshino, H.},
  title     = {Field Trial on 5G Low Latency Radio Communication System Towards Application to Truck Platooning},
  journal   = {IEICE Transactions on Communications},
  volume    = {102},
  number    = {8},
  pages     = {1447--1457},
  year      = {2019},
  doi       = {10.1587/transcom.2018TTP0021}
}

@article{Aki2014,
  author    = {Aki, M. and Zheng, R. and Yamabe, S. and Nakano, K. and Suda, Y. and Suzuki, Y. and Ishizaka, H. and Kawashima, H. and Sakuma, A.},
  title     = {Safety Testing of an Improved Brake System for Automatic Platooning of Trucks},
  journal   = {International Journal of Intelligent Transportation Systems Research},
  volume    = {12},
  number    = {3},
  pages     = {98--109},
  year      = {2014},
  doi       = {10.1007/s13177-014-0088-x}
}

@inproceedings{Kaneko2014,
  author    = {Kaneko, T. and Kuriyagawa, Y. and Kageyama, I.},
  title     = {Calculation Algorithm for Motion Control Target for the Platooning of Autonomous Heavy Vehicles},
  booktitle = {UKACC International Conference on Control (CONTROL)},
  pages     = {696--701},
  year      = {2014},
  doi       = {10.1109/CONTROL.2014.6915224}
}

@article{Lee2020,
  author    = {Lee, Y. and Ahn, T. and Lee, C. and Kim, S. and Park, K.},
  title     = {A Novel Path Planning Algorithm for Truck Platooning Using V2V Communication},
  journal   = {Sensors},
  volume    = {20},
  number    = {24},
  pages     = {7022},
  year      = {2020},
  doi       = {10.3390/s20247022}
}

@incollection{Kunze2009,
  author    = {Kunze, R. and Ramakers, R. and Henning, K. and Jeschke, S.},
  title     = {Organization and Operation of Electronically Coupled Truck Platoons on German Motorways},
  booktitle = {Intelligent Robot Application},
  volume    = {5928},
  pages     = {146},
  year      = {2009},
  doi       = {10.1007/978-3-642-10817-4_13}
}

@inproceedings{Laumonier2006,
  author       = {Laumonier, J. and Desjardins, C. and Chaib{-}Draa, B.},
  title        = {Cooperative Adaptive Cruise Control: A Reinforcement Learning Approach},
  booktitle    = {Proceedings of the 4th Workshop on Agents in Traffic and Transportation (ATT '06)},
  pages        = {1--9},
  address      = {Hakodate, Japan},
  year         = {2006}
}

@article{Xing2016,
  author       = {Xing, H. and Ploeg, J. and Nijmeijer, H.},
  title        = {Pad{\'e} Approximations of Delays in Cooperative ACC Based on String Stability Requirements},
  journal      = {IEEE Transactions on Intelligent Vehicles},
  volume       = {1},
  number       = {3},
  pages        = {277--286},
  year         = {2016},
  month        = sep,
  doi          = {10.1109/TIV.2016.2599903}
}
\end{document}